\newcommand{\cd}{{\cal D}}
\newcommand{\bsx}{\boldsymbol{x}}
\newcommand{\bsy}{\boldsymbol{y}}
\newcommand{\bsz}{\boldsymbol{z}}
\newcommand{\real}{\mathbb{R}}
\newcommand{\glu}{{\!\::\:\!}}
\newcommand{\tran}{\mathsf{T}} 
\newcommand{\phm}{\phantom{-}}
\newcommand{\phz}{\phantom{0}}
\newcommand{\mrd}{\mathrm{d}}
\newcommand{\rd}{\mathrm{\, d}}
\newcommand{\tr}{{\text{tr}}}
\newcommand{\var}{{\mathrm{Var}}}
\newcommand{\cov}{{\mathrm{Cov}}}
\newcommand{\wh}{\widehat}
\newcommand{\wt}{\widetilde}
\renewcommand{\emptyset}{\varnothing}
\renewcommand{\ge}{\geqslant}
\renewcommand{\le}{\leqslant}
\newcommand{\dustd}{\mathbf{U}} 
\newcommand{\e}{{\mathbb{E}}} 
\newcommand{\ul}{\underline}
\newcommand{\ol}{\overline}
\newtheorem{theorem}{Theorem}
\newtheorem{proposition}{Proposition}
\theoremstyle{definition}
\newtheorem{definition}{Definition}
\newcommand{\sumdot}{\text{\tiny$\bullet$}}
\renewcommand{\glu}{{:}}
\newcommand{\myemph}[1]{\textsl{\textbf{#1}}}
\newcommand{\xor}{\mathrm{XOR}}
\newcommand{\nxor}{\mathrm{NXOR}}
\newcommand{\ult}{\underline\tau}
\newcommand{\olt}{\overline\tau}
\newcommand{\simiid}{\stackrel{\mathrm{iid}}{\sim}}
\newcommand{\bsone}{\boldsymbol{1}}
\newcommand{\bff}{\mathbf{F}}
\author{
Art B. Owen\\
Stanford University}
\date{April 2012}
\title{Variance components and generalized Sobol' indices}
\begin{document}

\maketitle
\begin{abstract}
This paper introduces generalized Sobol' indices,
compares strategies for their estimation, and
makes a systematic search for efficient estimators.
Of particular interest are contrasts, sums of
squares and indices of bilinear form
which allow a reduced number of function evaluations
compared to alternatives. The bilinear framework
includes some efficient estimators from
Saltelli (2002) and Mauntz (2002) as well as
some new estimators for specific variance
components and mean dimensions.
This paper also provides 
a bias corrected version of the estimator
of Janon et al.\,(2012) and extends the bias
correction to generalized Sobol' indices.
Some numerical comparisons are given.
\end{abstract}

\section{Introduction}

Sobol' indices are certain sums of variance
components in an ANOVA decomposition. They are used to
understand the importance of various subsets
of variables in global sensitivity analysis.
\cite{salt:ratt:andr:camp:cari:gate:sais:tara:2008}
give an extensive introduction to Sobol' indices
and variance based methods in general for 
investigating computer models.
Linear combinations of Sobol' indices
are also used to measure the effective
dimension of functions for quasi-Monte Carlo
integration. 

This article reviews Sobol' indices for a statistical
audience, relating them to well known ideas in
experimental design, particularly crossed
random effects. 
Moving from physical experiments to computer
experiments brings important changes in both
the costs and goals of the analysis. 
In physical experiments one may be interested
in all components of variance, or at least all
of the low order ones. In computer experiments
interest centers instead on sums of variance components.
While the ANOVA
for computer experiments is essentially the
same as that for physical ones, the
experimental designs are different.

Of particular interest are what are
known as `fixing methods' for estimation
of Sobol' indices.  These evaluate the
function at two points. Those two points
have identical random values for some of the 
input components (the ones that are `fixed') 
but have independently sampled values 
for the other components.
Sample variances and covariances of point pairs are
then used to estimate the Sobol' indices.

As a basic example, let $f$ be a deterministic function
on $[0,1]^5$. One kind of Sobol' index estimate takes a form
like
\begin{align}\label{eq:sobollowexamp}
\cov\bigl(
f(x_{1},x_{2},x_{3},x_{4},x_{5}),
f(x_{1},x_{2},x_{3},z_{4},z_{5})
\bigr)
\end{align}
for $x_{j}\simiid\dustd(0,1)$ independently
of $z_{j}\simiid\dustd(0,1)$.
As we will see below, 
this index measures the sum of variance components
over all subsets of the first three input variables.
The natural design to estimate~\eqref{eq:sobollowexamp}
consists of $n$ pairs of function evaluations,
which share the first $3$ input values, and have
independent draws in the last $2$ inputs.
In the language of statistical experimental
design \citep{bhh} this corresponds to $n$
independently generated $1\times 1\times 1\times 2^{2-1}$
designs. The first three variables are at $1$ level, while the
last two are a fractional factorial. Each replicate uses
different randomly chosen levels for the five variables.

An example of the second kind of Sobol' index is
\begin{align}\label{eq:sobolhighexamp}
&\frac12\var\bigl(
f(x_{1},x_{2},x_{3},x_{4},x_{5})-
f(x_{1},x_{2},x_{3},z_{4},z_{5})\bigr)\\
&= \var\bigl( f(x_{1},x_{2},x_{3},x_{4},x_{5})\bigr)
-\cov\bigl(f(x_{1},x_{2},x_{3},x_{4},x_{5}),
f(x_{1},x_{2},x_{3},z_{4},z_{5})\bigr)\notag
\end{align}
The sampling design to estimate~\eqref{eq:sobolhighexamp}
is that same as that for~\eqref{eq:sobollowexamp},
but the quantity estimated
is now the sum of all variance components
that involve any of the first three variables. 
The difference between~\eqref{eq:sobollowexamp}
and~\eqref{eq:sobolhighexamp} is that the
latter includes interactions between
the first three and last two variables
while the former excludes them.

The great convenience of Sobol's measures is
that they can be directly estimated by integration,
without explicitly estimating all of the
necessary interaction effects,
squaring them, integrating their squares
and summing those integrated squared estimates.
Sobol' provides a kind of tomography: integrals
of cross-products of $f$ reveal facts about the internal
structure of $f$.

The goal of this paper is to exhibit the entire
space of linear combinations of cross-moments
of function evaluations with some variables
fixed and others independently sampled.
Such a linear combination is a generalized
Sobol' index, or GSI.
Then, using this space of functions, we make
a systematic search for estimators of interpretable
quantities with desirable computational or
statistical properties.

This systematic approach yields some new
and useful estimators. Some have reduced
cost compared to previously known ones.
Some have reduced sampling variance.
It also encompasses some earlier work.
In particular, an efficient strategy to estimate
all two factor interaction mean squares
due to~\cite{salt:2002} appears as a special
case.

Section~\ref{sec:review} introduces some notation
and reviews the ANOVA of
$[0,1]^d$ and Sobol' indices. A compact notation
is necessary to avoid cumbersome expressions
with many indices.
Section~\ref{sec:gsi} defines the generalized
Sobol' indices and gives an expression for their
value. It also defines several special classes
of GSI based on interpretability, computational
efficiency, or statistical considerations.
These are contrasts, squares, sums of squares
and bilinear GSIs.
Section~\ref{sec:squares} shows that
many GSIs including the Sobol' index~\eqref{eq:sobollowexamp}
cannot be estimated by unbiased sums of squares. 
Section~\ref{sec:specific} considers estimation
of a specific variance component for a proper
subset containing $k$ of the  variables. A direct approach
requires $2^k$ function evaluations per
Monte Carlo sample, while a bilinear estimate
reduces the cost to $2^{\lfloor k/2\rfloor}+
2^{k-\lfloor k/2\rfloor}$. That section also
introduces a bilinear estimate for the superset
importance measure defined in Section~\ref{sec:review}.
Section~\ref{sec:od} considers some GSIs for
high dimensional problems. It includes a contrast
GSI which estimates the mean square dimension
of a function of $d$ variables using only $d+1$
function evaluations per Monte Carlo trial
as well as some estimators of the
mean dimension in the truncation sense.
Section~\ref{sec:biasc}
presents a bias correction for GSIs
that are not contrasts.
Section~\ref{sec:compare} makes some comparisons
among alternative methods and Section~\ref{sec:conc}
has conclusions.

\section{Background and notation}\label{sec:review}

The analysis of variance originates
with~\cite{fish:mack:1923}. It partitions
the variance of a quantity among all
non-empty subsets of factors, defined
on a finite Cartesian grid.

The ANOVA was generalized by~\cite{hoef:1948}
to functions in $L^2[0,1]^d$ for integer $d\ge1$.
That generalization extends the one for
factorial experimental designs in a natural way,
and can be applied to $L^2$ functions
on any tensor product domain.
For $d=\infty$, see~\cite{lss}.

The ANOVA of $L^2[0,1]^d$ is also attributed
to \cite{sobo:1969}.
For historical interest, we note
that Sobol' used a different approach than Hoeffding.
He represented $f$ by an expansion
in a complete orthonormal basis (tensor
products of Haar functions) and gathered
together terms corresponding to each
subset of variables.  That is, 
where Hoeffding has an analysis,
Sobol' has a synthesis.

We use $\bsx=(x_1,x_2,\dots,x_d)$
to represent a typical point in $[0,1]^d$.
The set of indices is $\cd=\{1,2,\dots,d\}$.
We write $u\subset v$ to denote
a proper subset, that is $u\subsetneq v$.
For $u\subseteq\cd$ we use $|u|$ to denote
the cardinality of $u$, and either $-u$ or
$u^c$ (depending on typographical clarity)
to represent the complementary set
$\cd-u$. The expression $u+v$ means
$u\cup v$ where $u$ and $v$ are understood to be disjoint.

For $u=\{j_1,j_2,\dots,j_{|u|}\}\subseteq\cd$ the point $\bsx_u\in[0,1]^{|u|}$
has components $(x_{j_1},x_{j_2},\dots,x_{j_{|u|}})$.
The differential $\mrd\bsx_u$ is $\prod_{j\in u}\mrd x_j$.

The ANOVA decomposition represents $f(\bsx)$ via
$$f(\bsx) = \sum_{u\subseteq\cd} f_u(\bsx)$$
where the functions $f_u$ are defined recursively by
\begin{align*}
f_u(\bsx) &= 
\int_{[0,1]^{d-|u|}}
\Bigl(f(\bsx)-\sum_{v\subset u}f_v(\bsx)\Bigr)\rd \bsx_{-u}\\
&=\int_{[0,1]^{d-|u|}}
f(\bsx)\rd \bsx_{-u}
-\sum_{v\subset u}f_v(\bsx).
\end{align*}
In statistical language, $u$ is a set of
factors and $f_u$ is the corresponding effect.
We get $f_u$ by subtracting sub-effects
of $f_u$ from $f$ and then averaging the
residual over $x_j$ for $j\not\in u$.

From usual conventions,
$f_\emptyset(\bsx) = \mu\equiv\int_{[0,1]^d}f(\bsx)\rd\bsx$
for all $\bsx\in [0,1]^d$.
The effect $f_u$ only depends on $x_j$ for those $j\in u$.
For $f\in L^2[0,1]^d$, these functions satisfy
$\int_0^1 f_u(\bsx)\rd x_j=0$, when $j\in u$
(proved by induction on $|u|$),
from which it follows that
$\int f_u(\bsx)f_v(\bsx)\rd\bsx=0$, for $u\ne v$.
The ANOVA identity is
$\sigma^2 = \sum_u\sigma^2_u$
where $\sigma^2=\int (f(\bsx)-\mu)^2\rd\bsx$,
$\sigma^2_\emptyset=0$ and
$\sigma^2_u = \int f_u(\bsx)^2\rd\bsx$
is the variance component for $u\ne\emptyset$.

We will need the following quantities.
\begin{definition}
For integer $d\ge1$,
let $u$ and $v$ be subsets of $\cd=\{1,\dots,d\}$.
Then the sets $\xor(u,v)$ and $\nxor(u,v)$ are
\begin{align*}
\xor(u,v) & = u\cup v -u\cap v\\
\nxor(u,v) & = (u\cap v)\bigcup\, (u^c\cap v^c).
\end{align*}
\end{definition}
These are the exclusive-or of $u$ and $v$
and its complement, respectively.
The $\nxor$ operation satisfies the following
easily verifiable properties:
$\nxor(u,v)=\nxor(v,u)$, and
if $w=\nxor(u,v)$ then $u=\nxor(v,w)$.

\subsection{Sobol' indices}

This section introduces the Sobol' indices
that we generalize, and mentions some of
the methods for their estimation.

There are various ways that one might
define the importance of a variable $x_j$.
The importance of variable $j\in\{1,\dots,d\}$
is due in part to $\sigma^2_{\{j\}}$, but also
due to $\sigma^2_u$ for other sets $u$
with $j\in u$.  More generally, we may
be interested in the importance of
$\bsx_u$ for a subset $u$ of the variables.

\cite{sobo:1993} 
introduced two measures of variable
subset importance, which we denote
\begin{align*}
\underline\tau_u^2 & = \sum_{v\subseteq u}\sigma_v^2,\quad\text{and}\quad
\overline\tau_u^2 = \sum_{v\cap u\ne\emptyset}\sigma_v^2.
\end{align*}
We call these the lower and upper Sobol' index
for the set $u$, respectively.
The lower index is a total of variance components
for $u$ and all of its subsets.
One interpretation is
$\ult^2_u = \var( \mu_u(\bsx))$
where $\mu_u(\bsx) = \e( f(\bsx)\mid \bsx_u)$.

The upper index counts every ANOVA component that
touches the set $u$ in any way.
If $\ult_u^2$ is large then the subset $u$
is clearly important. If $\olt_u^2$ is small, then
the subset $u$ is not important and 
\cite{sobo:tara:gate:kuch:maun:2007}
investigate the effects of fixing such $\bsx_u$ at
some specific values.
The second measure includes interactions between
$\bsx_u$ and $\bsx_{-u}$ while the first measure
does not.

These Sobol' indices satisfy $\ul\tau_u^2\le\ol\tau_u^2$
and $\ul\tau_u^2+\ol\tau_{-u}^2=\sigma^2$.
Sobol' usually normalizes these quantities,
yielding global sensitivity indices $\underline\tau_u^2/\sigma^2$
and $\overline\tau_u^2/\sigma^2$.  In this paper
we work mostly with unnormalized versions.

Sobol's original work was published
in~\cite{sobo:1990} before being translated
in~\cite{sobo:1993}.
\cite{ishi:homm:1990} independently considered
computation of $\ult^2_{\{j\}}$.

To estimate Sobol' indices, one pairs
the point $\bsx$ with a hybrid
point $\bsy$ that shares some but not
all of the components of $\bsx$.
We denote the hybrid point by $\bsy = \bsx_u\glu\bsz_{-u}$ where
$y_j = x_j$ for $j\in u$ and $y_u =z_j$
for $j\not\in u$.

From the ANOVA properties one can show directly that
\begin{align*}
\int_{[0,1]^{2d-|u|}} f(\bsx)f(\bsx_u\glu\bsz_{-u})\rd\bsx\rd\bsz_{-u}
& =
\sum_v\int_{[0,1]^{2d-|u|}} 
f_v(\bsx)f(\bsx_u\glu\bsz_{-u})\rd\bsx\rd\bsz_{-u}\\
&=\mu^2 +\ul\tau_u^2.
\end{align*}
This is also a special case of Theorem~\ref{thm:withnxor} below.
As a result
$
\ult^2_u = \cov( f(\bsx),f(\bsx_u\glu\bsz_{-u}))
$
and so one can estimate this Sobol' index by
\begin{align}\label{eq:janons}
\wh\ult^2_u =
\frac1n\sum_{i=1}^n f(\bsx_{i})f(\bsx_{i,u}\glu\bsz_{i,-u}) - \hat\mu^2,
\end{align}
for $\bsx_i,\bsz_i\simiid \dustd(0,1)^d$,
where $\hat\mu = (1/n)\sum_{i=1}^nf(\bsx_i)$.
It is even better to use
$\hat\mu = (1/n)\sum_{i=1}^n(f(\bsx_i)+f(\bsx_{i,u}\glu\bsz_{i,-u}))/2$
instead, as shown by~\cite{jano:klei:lagn:node:prie:2012:tr}.

Similarly one can show that
$$\ol\tau_u^2 =
\frac12\int_{[0,1]^{d+|u|}} (f(\bsx)-f(\bsx_{-u}\glu\bsz_{u}))^2\rd\bsx\rd\bsz_{u},
$$
and one gets the estimate
$$
\wh\olt_u^2 = \frac1{2n}\sum_{i=1}^n
(f(\bsx_i)-f(\bsx_{i,-u}\glu\bsz_{i,u}))^2,
$$
for $\bsx_i,\bsz_i\simiid\dustd(0,1)^d$.

The estimate $\wh\olt_u^2$ is unbiased
for $\olt_u^2$ but $\wh\ult_u^2$ above is not
unbiased for $\ult_u^2$. It has a bias equal
to $-\var(\hat\mu)$. If $\int |f(\bsx)|^4\rd\bsx<\infty$
then this bias is asymptotically negligible, but in cases
where $\ult^2_u$ is small, the bias may be important.

\cite{maun:2002} and 
\cite{kuch:feil:shah:maun:2011}
use an estimator for $\ult^2_u$ derived as a sample
version of the identity
\begin{align}\label{eq:mauntz}
\ult^2_u = \iint f(\bsx)(f(\bsx_u\glu\bsz_{-u})-f(\bsz))\rd\bsx\rd\bsz.
\end{align}
\cite{salt:2002} also mentions this estimator.
Here and below, integrals are by default over
$\bsx\in[0,1]^d$ and/or $\bsz\in[0,1]^d$ even
though some components $x_j$ or $z_j$ may not be required.
An estimator based on~\eqref{eq:mauntz} with
$\bsx_i,\bsz_i\simiid\dustd(0,1)^d$ is unbiased for
$\ult^2_u$, but  it requires
$3n$ evaluations of $f$ instead of the $2n$
required by~\eqref{eq:janons}
with either formula for $\hat\mu$.
Proposition~\ref{prop:janonunb} in Section~\ref{sec:biasc}
gives an unbiased variant on the
estimator of~\eqref{eq:janons}
using only $2$ function evaluations per
$(\bsx_i,\bsz_i)$ pair.

There are $2^d-1$ variance components $\sigma^2_u$
as well as $2^d-1$ Sobol' indices $\ult^2_u$
and $\olt^2_u$ of each type. 
We can recover any desired $\sigma^2_u$ as
a linear combination of $\ult^2_v$.
For example
$\sigma^2_{\{1,2,3\}} = 
\ult^2_{\{1,2,3\}}
-\ult^2_{\{1,2\}}
-\ult^2_{\{1,3\}}
-\ult^2_{\{2,3\}}
+\ult^2_{\{1\}}
+\ult^2_{\{2\}}
+\ult^2_{\{3\}}$.
More generally, we have the Moebius-type relation
\begin{align}\label{eq:moebius}
\sigma^2_u = \sum_{v\subseteq u}(-1)^{|u-v|}\ult^2_v.
\end{align}

Because $f$ is defined on a unit cube and can
be computed at any desired point, methods
other than simple Monte Carlo can be applied.
Quasi-Monte
Carlo (QMC) sampling (see~\cite{nied92})
can be used instead of plain Monte Carlo,
and \cite{sobo:2001} reports that QMC is more effective.
For functions $f$ that are very
expensive, a Bayesian numerical analysis approach
\citep{oakl:ohag:2004} based on a Gaussian process
model for $f$ is an attractive way to compute
Sobol' indices.

\subsection{Related indices}

Another measure of the importance of $\bsx_u$
is the superset importance measure
\begin{align}\label{eq:superset}
\Upsilon^2_u = \sum_{v\supseteq u}\sigma^2_v
\end{align}
used by~\cite{hook:2004}
to quantify the effect of dropping 
all interactions containing the set $u$
of variables from a black box function.

Sums of ANOVA components are also used in
quasi-Monte Carlo sampling.  QMC is, in general, 
more effective on integrands $f$ that are dominated
by their low order ANOVA components. Two
versions of $f$ that are equivalent in Monte
Carlo sampling may behave quite differently
in QMC.  For example the basis used to sample
Brownian paths has been seen to affect
the accuracy of QMC integrals
\citep{cafmowen,acbrogla97,imai:tan:2002}.

The function $f$ has effective dimension
$s$ in the superposition sense \citep{cafmowen}, if
$\sum_{|u|\le s}\sigma^2_u\ge (1-\epsilon)\sigma^2$.
Typically $\epsilon=0.01$ is used as a default.
Similarly, $f$ has effective dimension
$s$ in the truncation sense \citep{cafmowen}, if
$\sum_{u\subseteq \{1,2,\dots,s\}}\sigma^2_u
=\ult^2_{\{1,2,\dots,s\}}\ge (1-\epsilon)\sigma^2$.

It is much easier to estimate the mean dimension
(superposition sense)
defined as $\sum_{u} |u|\sigma^2_u/\sigma^2$
than the effective dimension,
because the mean dimension is a linear
combination of variance components.
The mean dimension also offers better resolution
than effective dimension. For instance, two functions
having identical effective dimension $2$ might have 
different mean dimensions, say $1.03$ and $1.05$.
Similarly
one can estimate a mean square dimension
$\sum_{u} |u|^2\sigma^2_u/\sigma^2$:

\begin{theorem}\label{thm:dimmoments}
\begin{align}
\sum_{j=1}^d\olt^2_{\{j\}} &= \sum_{u}|u|\sigma^2_u\label{eq:meandim}\\
\sum_{j=1}^d\sum_{k\ne j}\olt^2_{\{j,k\}}
&= 2(d-1)\sum_{u}|u|\sigma^2_u
-\sum_u|u|^2\sigma^2_u
\label{eq:vardim}
\end{align}
\end{theorem}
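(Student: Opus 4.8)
The plan is to prove both identities by inserting the definition $\olt^2_w=\sum_{v:\,v\cap w\ne\emptyset}\sigma_v^2$, interchanging the (finite) order of summation, and thereby reducing each left-hand side to a purely combinatorial count of how many of the relevant index sets $w$ meet a fixed $v$. Since $\sigma^2_\emptyset=0$, the empty set never contributes, so throughout I may restrict attention to nonempty $v$, and everything comes down to collecting the resulting coefficients of $\sigma_v^2$.

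For~\eqref{eq:meandim} I would write
\begin{equation*}
\sum_{j=1}^d\olt^2_{\{j\}}
=\sum_{j=1}^d\sum_{v:\,j\in v}\sigma_v^2
=\sum_v\sigma_v^2\,\bigl|\{j:j\in v\}\bigr|
=\sum_v|v|\,\sigma_v^2,
\end{equation*}
where the middle step is the interchange of the two sums and the last step merely counts the elements of $v$. This settles the first identity with no further work.

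For~\eqref{eq:vardim} the same interchange gives
\begin{equation*}
\sum_{j=1}^d\sum_{k\ne j}\olt^2_{\{j,k\}}
=\sum_v\sigma_v^2\,c(v),\qquad
c(v)=\bigl|\{(j,k):j\ne k,\ \{j,k\}\cap v\ne\emptyset\}\bigr|,
\end{equation*}
so everything reduces to evaluating the count $c(v)$ of \emph{ordered} distinct pairs that meet $v$. I would compute $c(v)$ by inclusion--exclusion on the events $j\in v$ and $k\in v$: there are $|v|(d-1)$ ordered pairs with $j\in v$, another $|v|(d-1)$ with $k\in v$, and their overlap (both $j,k\in v$) has $|v|(|v|-1)$ members, giving
\begin{equation*}
c(v)=2(d-1)\,|v|-|v|\bigl(|v|-1\bigr).
\end{equation*}
Substituting this into $\sum_v\sigma_v^2\,c(v)$ and separating the terms proportional to $|v|$ from those proportional to $|v|^2$ then reproduces the right-hand side of~\eqref{eq:vardim}.

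The argument has no serious obstacle; the only point needing care is the bookkeeping in $c(v)$. One must respect the ordered-pair convention built into the double sum $\sum_j\sum_{k\ne j}$ and keep track of the $j\ne k$ restriction, which is exactly what turns a naive $|v|^2$ into $|v|(|v|-1)$ in the overlap term. As a consistency check I would recompute $c(v)$ by complementary counting as $d(d-1)-(d-|v|)(d-|v|-1)$ and verify that the two expressions agree before collecting terms.
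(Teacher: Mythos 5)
Your reduction of both sides to coefficients of $\sigma^2_v$ is the right idea, and your treatment of~\eqref{eq:meandim} is correct. (The paper itself does not argue this way --- its entire proof is the citation to Theorem~2 of \cite{meandim} --- so your direct combinatorial argument is a genuinely self-contained alternative.) However, the final step of your argument for~\eqref{eq:vardim} does not go through as claimed. Your count
$$c(v)=2(d-1)|v|-|v|\bigl(|v|-1\bigr)=2(d-1)|v|-|v|^2+|v|$$
is correct --- it agrees with your complementary count $d(d-1)-(d-|v|)(d-|v|-1)=2d|v|-|v|^2-|v|$ --- but it is \emph{not} the coefficient $2(d-1)|v|-|v|^2$ that appears on the right-hand side of~\eqref{eq:vardim}; it exceeds it by $|v|$. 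Substituting $c(v)$ into $\sum_v\sigma^2_v\,c(v)$ therefore yields
$$\sum_{j=1}^d\sum_{k\ne j}\olt^2_{\{j,k\}}=2(d-1)\sum_u|u|\sigma^2_u-\sum_u|u|\bigl(|u|-1\bigr)\sigma^2_u,$$
which differs from the stated~\eqref{eq:vardim} by an extra $\sum_u|u|\sigma^2_u$. The sentence ``then reproduces the right-hand side'' is exactly where the proof breaks: you never expanded $|v|(|v|-1)$ and compared it with $|v|^2$.

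The discrepancy is not an artifact of your bookkeeping. Take $d=2$ with $\sigma^2_{\{1\}}=\sigma^2_{\{2\}}=\sigma^2_{\{1,2\}}=1$: the left side of~\eqref{eq:vardim} is $2\,\olt^2_{\{1,2\}}=2\sigma^2=6$, while the stated right side is $2(1)(4)-6=2$. Your formula with $|u|(|u|-1)$ gives $8-2=6$, which matches. So your counting in fact establishes the identity with $|u|(|u|-1)$ in place of $|u|^2$ (equivalently, with $(2d-1)\sum_u|u|\sigma^2_u-\sum_u|u|^2\sigma^2_u$ on the right), and the printed statement of~\eqref{eq:vardim} cannot be literally correct under the ordered-pair reading of $\sum_j\sum_{k\ne j}$. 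A complete write-up must either prove the corrected identity and flag the discrepancy with the statement, or identify a different reading of the left-hand side; it cannot simply assert that the count matches the printed formula.
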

\begin{proof}
This follows from Theorem 2 of~\cite{meandim}.
\end{proof}

\section{Generalized Sobol' indices}\label{sec:gsi}

Here we consider a general family
of quadratic indices similar to
those of Sobol'.
The general form of these indices is
\begin{align}\label{eq:gensobo}
\sum_{u\subseteq\cd}\sum_{v\subseteq\cd}
\Omega_{uv}\iint f(\bsx_u\glu\bsz_{-u})f(\bsx_v\glu\bsz_{-v})
\rd\bsx\rd\bsz
\end{align}
for coefficients $\Omega_{uv}$.
If we think of $f(\bsx)$ as being the
standard evaluation, then the generalized
Sobol' indices~\eqref{eq:gensobo} are linear
combinations of all possible second
order moments of $f$ based on fixing
two subsets, $u$ and $v$, of input variables.

A matrix representation of~\eqref{eq:gensobo}
will be useful below.  First we introduce
the Sobol' matrix
$\Theta\in\real^{2^d\times 2^d}$
with entries
$\Theta_{uv} = 
\iint f(\bsx_u\glu\bsz_{-u})f(\bsx_v\glu\bsz_{-v})
\rd\bsx\rd\bsz$ indexed by subsets $u$ and $v$.
Then~\eqref{eq:gensobo} is the matrix inner product
$\tr(\Omega^\tran\Theta)$ for the matrix
$\Omega$.
Here and below, we use matrices and vectors
indexed by the $2^d$ subsets of $\cd$.
The order in which subsets appear 
is not specified; any consistent ordering is
acceptable.

The Sobol' matrix is symmetric.  It also satisfies
$\Theta_{uv} = \Theta_{u^cv^c}$.
Theorem~\ref{thm:withnxor} gives the general form
of a Sobol' matrix entry.

\begin{theorem}\label{thm:withnxor}
Let $f\in L^2[0,1]^d$ for $d\ge 1$, 
with mean $\mu=\int f(\bsx)\rd\bsx$
and variance components $\sigma^2_u$
for $u\subseteq\cd$.
Let $u,v\subseteq\cd$. Then
the $uv$ entry of the Sobol' matrix is
$$\Theta_{uv} 
= \mu^2+\underline\tau^2_{\nxor(u,v)}.$$
\end{theorem}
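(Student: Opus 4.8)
The plan is to substitute the ANOVA decomposition $f=\sum_s f_s$ into both factors of the integrand defining $\Theta_{uv}$ and then identify which of the resulting cross terms survive integration. Writing
$$\Theta_{uv} = \sum_{s\subseteq\cd}\sum_{t\subseteq\cd}\iint f_s(\bsx_u\glu\bsz_{-u})\,f_t(\bsx_v\glu\bsz_{-v})\rd\bsx\rd\bsz,$$
I would study each term $I_{st}$ separately. The key bookkeeping observation is that $f_s(\bsx_u\glu\bsz_{-u})$ depends on $x_j$ exactly for $j\in s\cap u$ and on $z_j$ exactly for $j\in s\cap u^c$, while $f_t(\bsx_v\glu\bsz_{-v})$ depends on $x_j$ for $j\in t\cap v$ and on $z_j$ for $j\in t\cap v^c$.

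The main step is to show that $I_{st}=0$ unless $s=t$ and $s\subseteq\nxor(u,v)$. Here I would use the vanishing property $\int_0^1 f_s\rd x_j=0$ for $j\in s$: whenever a variable $x_j$ or $z_j$ occurs in exactly one of the two factors, integrating that single variable out first (justified by Fubini, since the product lies in $L^1$ by Cauchy--Schwarz) annihilates the whole term. This forces $s\cap u = t\cap v$ from matching the $x$-dependence and $s\cap u^c = t\cap v^c$ from matching the $z$-dependence; since $s=(s\cap u)\cup(s\cap u^c)$ and likewise for $t$, these two conditions together give $s=t$. With $s=t$ in hand, the same two conditions read $s\cap u=s\cap v$ and $s\cap u^c=s\cap v^c$, which says precisely that every $j\in s$ lies in $u\cap v$ or in $u^c\cap v^c$, i.e. $s\subseteq\nxor(u,v)$.

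For the surviving terms, with $s=t$ and $s\subseteq\nxor(u,v)$, both factors are the \emph{same} function $f_s$ evaluated at the \emph{same} argument (each coordinate $j\in s$ is drawn from $x_j$ when $j\in u\cap v$ and from $z_j$ when $j\in u^c\cap v^c$). Integrating out the variables on which $f_s$ does not depend contributes factors of one, and integrating $f_s^2$ over its own dummy variables gives $\int f_s^2\rd\bsx$. Hence $I_{ss}=\int f_s(\bsx)^2\rd\bsx$ for $s\subseteq\nxor(u,v)$ and $I_{st}=0$ otherwise, so that $\Theta_{uv}=\sum_{s\subseteq\nxor(u,v)}\int f_s^2\rd\bsx$.

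The only delicate piece of bookkeeping I anticipate is the term $s=\emptyset$: there $f_\emptyset\equiv\mu$ contributes $\int f_\emptyset^2=\mu^2$ rather than $\sigma_\emptyset^2=0$, so I would separate it out and use $\sigma_s^2=\int f_s^2$ for $s\ne\emptyset$, giving $\Theta_{uv}=\mu^2+\sum_{\emptyset\ne s\subseteq\nxor(u,v)}\sigma_s^2=\mu^2+\underline\tau^2_{\nxor(u,v)}$. The genuinely substantive step is the single-variable integration argument identifying the nonzero terms; everything else is the routine observation that, once the arguments of the two factors agree on $s$, the integral collapses to the variance component $\sigma_s^2$.
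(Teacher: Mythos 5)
Your proposal is correct and follows essentially the same route as the paper: expand both factors in the ANOVA decomposition, use $\int_0^1 f_s\rd x_j=0$ for $j\in s$ to kill every cross term in which some $x_j$ or $z_j$ appears in only one factor, and observe that the survivors are exactly the diagonal terms with $s=t\subseteq\nxor(u,v)$, each contributing $\int f_s^2$. Your version merely merges the paper's two elimination steps (first $w\ne w'$, then $w\not\subseteq\nxor(u,v)$) into one bookkeeping condition, and is a bit more explicit about Fubini and the $s=\emptyset$ term.
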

\begin{proof}
First,
\begin{align*}
\Theta_{uv} &=
\iint f(\bsx_u\glu\bsz_{-u})f(\bsx_v\glu\bsz_{-v})\rd\bsx\rd\bsz\\
& =\sum_{w\subseteq\cd}\sum_{w'\subseteq\cd}
\iint f_w(\bsx_u\glu\bsz_{-u})
f_{w'}(\bsx_v\glu\bsz_{-v})\rd\bsx\rd\bsz\notag\\
& =\sum_{w\subseteq\cd}
\iint f_w(\bsx_u\glu\bsz_{-u})f_{w}(\bsx_v\glu\bsz_{-v})\rd\bsx\rd\bsz,
\end{align*}
because if $w\ne w'$, then there is an index
$j\in\xor(w,w')$, 
for which either the integral over $x_j$
or the integral over $z_j$ of $f_wf_{w'}$ above vanishes.

Next, suppose that $w$ is not a subset
of $\nxor(u,v)$.
Then there is an index $j\in\xor(u,v)\cap w$.
If $j\in w\cap u\cap v^c$, then the integral over $x_j$
vanishes, while if $j\in w\cap v\cap u^c$,
then the integral over $z_j$ vanishes.
Therefore
\begin{align*}
\Theta_{uv}
&=\sum_{w\subseteq\nxor(u,v)}
\iint f_w(\bsx_u\glu\bsz_{-u})f_{w}(\bsz_v\glu\bsx_{-v})\rd\bsx\rd\bsz\\
&=\sum_{w\subseteq\nxor(u,v)}\int f_w(\bsx)^2\rd\bsx\\
&=\mu^2+\underline\tau^2_{\nxor(u,v)}.\qedhere
\end{align*}
\end{proof}

\subsection{Special GSIs}

Equation~\eqref{eq:gensobo} describes a
$2^{2d}$ dimensional family of linear
combinations of pairwise function products.
There are only $2^d-1$ ANOVA components to estimate.
Accordingly we are interested in special
cases of~\eqref{eq:gensobo} with desirable properties.

A GSI is a \myemph{contrast} if
$\sum_u\sum_v\Omega_{uv}=0$.  
Contrasts are unaffected
by the value of the mean $\mu$,
and so they lead to unbiased estimators
of linear combinations of variance components.
GSIs that are not contrasts contain
a term $\mu^2\sum_u\sum_v\Omega_{uv}$
and require us to subtract an estimate
 $\hat\mu^2\sum_u\sum_v\Omega_{uv}$
in order to estimate 
$\sum_u\sum_v\Omega_{uv}\ult^2_{\nxor(u,v)}$
as in Section~\ref{sec:biasc}.

A generalized Sobol' index is a \myemph{square} if
it takes the form
$$
\iint \biggl(\,\sum_u\lambda_uf(\bsx_u\glu\bsz_{-u})\biggr)^2\rd\bsx\rd\bsz.
$$
Squares and sums of squares have the advantage
that they are non-negative and hence avoid the
problems associated with negative sample variance
components. 
A square GSI can be written compactly as
$\tr( \lambda\lambda^\tran\Theta)=\lambda^\tran\Theta\lambda$
where $\lambda$ is a vector of $2^d$ coefficients.
If $\lambda_u$ is sparse (mostly zeros)
then a square index is inexpensive to compute.

A generalized Sobol' index is \myemph{bilinear} if
it takes the form
$$
\iint 
\biggl(\,\sum_u\lambda_uf(\bsx_u\glu\bsz_{-u})\biggr)
\biggl(\,\sum_v\gamma_vf(\bsx_v\glu\bsz_{-v})\biggr)
\rd\bsx\rd\bsz.
$$
Bilinear estimates have the advantage
of being rapidly computable. If there
are $\Vert \lambda\Vert_0$ nonzero elements
in $\lambda$
and  $\Vert \gamma\Vert_0$ nonzero elements
in $\gamma$ then the integrand in 
a bilinear generalized Sobol' index can
be computed with at most 
$\Vert\gamma\Vert_0+\Vert\lambda\Vert_0$ function calls
and sometimes fewer (see Section~\ref{sec:costs})
even though it combines values from
$\Vert\gamma\Vert_0\times\Vert\lambda\Vert_0$ function pairs.
We can write the bilinear GSI as
$\tr( \lambda\gamma^\tran\Theta) =\gamma^\tran\Theta\lambda$.
The sum of a small number of bilinear GSIs is
a low rank GSI.

A GSI is \myemph{simple} if it is written
as a linear combination of entries in just
one row or just one column of $\Theta$,
such as
$$
\iint 
\sum_u\lambda_uf(\bsx_u\glu\bsz_{-u})f(\bsz)
\rd\bsx\rd\bsz.
$$
It is convenient if the
chosen row or column corresponds to $u$
or $v$ equal to $\emptyset$ or $\cd$.
Any linear combination 
$\sum_u\delta_u(\mu^2+\ult^2_u)$
of variance components and $\mu^2$ can be written as a simple GSI
taking $\lambda_u = \delta_{-u}$.
There are computational advantages to some non-simple 
representations.

\subsection{Sample GSIs}

To estimate a GSI we take
pairs $(\bsx_i,\bsz_i)\simiid\dustd(0,1)^{2d}$
for $i=1,\dots,n$
and compute
$\tr(\Omega^\tran\wh\Theta)$
where
$$
\wh\Theta_{uv} = \frac1n\sum_{i=1}^n f(\bsx_{i,u}\glu\bsz_{i,-u})f(\bsx_{i,v}\glu\bsz_{i,-v}).
$$
We can derive a matrix expression for the
estimator by introducing the vectors
$$F_i \equiv F(\bsx_i,\bsz_i) = 
\bigl(f(\bsx_{i,u}\glu\bsz_{i,-u})\bigr)_{u\subseteq\cd}\in\real^{2^d\times 1},$$
for $i=1,\dots,n$
and the matrix
$$\bff =\begin{pmatrix}F_1 & F_2 & \cdots & F_n\end{pmatrix}^\tran
\in\real^{n\times 2^d}.$$
The vectors $F_i$ have covariance $\Theta-\mu^2$. Then
$$\wh\Theta = \frac1n\sum_{i=1}^nF_iF_i^\tran = \frac1n\bff^\tran\bff,$$
and the sample GSI is
$$\tr(\Omega^\tran\wh\Theta)
=\tr(\wh\Theta^\tran\Omega)
=\frac1n\tr(\bff^\tran\bff\Omega)
=\frac1n\tr(\bff^\tran\Omega\bff).$$

\subsection{Cost per $(\bsx,\bsz)$ pair}\label{sec:costs}

We suppose that the cost of computing a
sample GSI is dominated by the number
of function evaluations required.
If the GSI requires $C(\Omega)$ (defined below)
distinct function evaluations
per pair $(\bsx_i,\bsz_i)$ for $i=1,\dots,n$,
then the cost of the sample GSI is proportional to $nC(\Omega)$.

If the row $\Omega_{uv}$ for given $u$
and all values of $v$ is not entirely zero
then we need the value $f(\bsx_u\glu\bsz_{-u})$.
Let 
$$C_{u\sumdot}(\Omega)
=\begin{cases} 1, & \exists v\subseteq\cd\quad\text{with}\quad 
\Omega_{uv}\ne 0\\
0,&\text{else}
\end{cases}
$$
indicate whether $f(\bsx_u\glu\bsz_{-u})$ is needed
as the `left side' of a product
$f(\bsx_u\glu\bsz_{-u})f(\bsx_v\glu\bsz_{-v})$.
The number of function evaluations required
for the GSI $\tr(\Omega^\tran\Theta)$ is:
\begin{align}\label{eq:costdef}
C(\Omega) = 
\sum_{u\subseteq\cd} 
\Bigl(
C_{u\sumdot}(\Omega)+C_{u\sumdot}(\Omega^\tran)
-C_{u\sumdot}(\Omega)C_{u\sumdot}(\Omega^\tran)
\Bigr).
\end{align}
We count the number of rows of $\Omega$
for which $f(\bsx_u\glu\bsz_{-u})$
is needed, add the number of columns
and then subtract the number of double counted
sets $u$.

\section{Squares and sums of squares}\label{sec:squares}

A square or sum of squares yields a nonnegative
estimate.
An unbiased and nonnegative estimate is especially
valuable. 
When the true GSI is zero, 
an unbiased nonnegative estimate will always
return exactly zero as~\cite{frut:rous:kuhn:2012:tr}
remark.
The Sobol' index $\olt^2_u$ is of square
form, but $\ult^2_u$ is not.
Theorem~\ref{thm:dimmoments}
leads to a sum of squares for $\sum_u|u|\sigma^2_u$.

\cite{meandim} express the superset
importance measure as a square:
\begin{align}\label{eq:upassquare}
\Upsilon^2_u = 
\frac1{2^{|u|}}
\iint 
\biggl|
\sum_{ v\subseteq u}
(-1)^{|u-v|}f(\bsx_v\glu\bsz_{-v})\biggr|^2
\rd\bsx\rd\bsz.
\end{align}
\cite{frut:rous:kuhn:2012:tr} find that
a sample version of~\eqref{eq:upassquare}
is the best among four estimators of $\Upsilon^2_u$.

In classical crossed mixed effects models 
\citep{mont:1997}
every ANOVA expected mean square
has a contribution from the highest order
variance component, typically containing measurement error.
A similar phenomenon applies for Sobol' indices.
In particular, no square GSI or sum of squares will yield $\ult^2_u$
for $|u|<d$, as the next proposition shows.

\begin{proposition}
The coefficient of $\sigma^2_{\cd}$ in
$\sum_{r=1}^R \iint (\sum_u\lambda_{r,u}f(\bsx_u\glu\bsz_{-u}))^2\rd\bsx\rd\bsz$
is
$\sum_{r=1}^R\sum_u\lambda_{r,u}^2$.
\end{proposition}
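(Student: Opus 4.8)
The plan is to reduce the whole sum to a diagonal computation by using the explicit form of the Sobol' matrix from Theorem~\ref{thm:withnxor} and then reading off the contribution of $\sigma^2_{\cd}$. First I would rewrite a single square in quadratic form. Writing $\lambda_r=(\lambda_{r,u})_{u\subseteq\cd}$, the square formula of Section~\ref{sec:gsi} gives that the $r$-th summand equals $\lambda_r^\tran\Theta\lambda_r=\sum_u\sum_v\lambda_{r,u}\lambda_{r,v}\Theta_{uv}$, so the entire expression is $\sum_{r=1}^R\sum_u\sum_v\lambda_{r,u}\lambda_{r,v}\Theta_{uv}$. It therefore suffices to determine the coefficient of $\sigma^2_{\cd}$ in each entry $\Theta_{uv}$.

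Next I would invoke Theorem~\ref{thm:withnxor}, which states $\Theta_{uv}=\mu^2+\ult^2_{\nxor(u,v)}$. Since $\ult^2_w=\sum_{s\subseteq w}\sigma^2_s$ and $w=\nxor(u,v)$ is always a subset of $\cd$, the component $\sigma^2_{\cd}$ appears in $\ult^2_{\nxor(u,v)}$ with coefficient $1$ exactly when $\cd\subseteq\nxor(u,v)$---equivalently when $\nxor(u,v)=\cd$---and with coefficient $0$ otherwise. The constant $\mu^2$ contributes nothing to $\sigma^2_{\cd}$.

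The crux is the combinatorial fact that $\nxor(u,v)=\cd$ if and only if $u=v$. This follows directly from the definition $\nxor(u,v)=(u\cap v)\cup(u^c\cap v^c)$: an index $j$ lies in $\nxor(u,v)$ precisely when it belongs to both $u$ and $v$ or to neither, i.e.\ when $j\in u\iff j\in v$; demanding this for every $j\in\cd$ is exactly the condition $u=v$. I expect this to be the only substantive step, since it is the single place where the specific $\nxor$ structure, rather than generic ANOVA orthogonality, does the work.

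Finally I would substitute back. The coefficient of $\sigma^2_{\cd}$ in $\Theta_{uv}$ is $1$ when $u=v$ and $0$ otherwise, so in the double sum defining the $r$-th square only the diagonal terms $u=v$ survive, leaving $\sum_u\lambda_{r,u}^2$. Summing over $r=1,\dots,R$ yields the claimed coefficient $\sum_{r=1}^R\sum_u\lambda_{r,u}^2$.
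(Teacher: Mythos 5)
Your proof is correct and follows essentially the same route as the paper: expand each square as $\lambda_r^\tran\Theta\lambda_r$, use Theorem~\ref{thm:withnxor} to see that $\sigma^2_{\cd}$ appears only in entries with $\nxor(u,v)=\cd$, i.e.\ only on the diagonal, and read off $\sum_u\lambda_{r,u}^2$. The paper simply states that only the diagonal entries of $\Theta$ contain $\sigma^2_{\cd}$; you have made explicit the underlying fact that $\nxor(u,v)=\cd$ iff $u=v$, which the paper leaves implicit.
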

\begin{proof}
It is enough to show this for $R=1$ with
$\lambda_{1,u}=\lambda_u$.
First, 
\begin{align}\label{eq:onesquare}
\iint (\sum_u\lambda_{u}f(\bsx_u\glu\bsz_{-u}))^2\rd\bsx\rd\bsz
& = \lambda^\tran\Theta\lambda.
\end{align}
Next, the only elements of $\Theta$ containing
$\sigma^2_{\cd}$ are the diagonal ones, equal
to $\sigma^2$.
Therefore the coefficient of $\sigma_{\cd}^2$
in~\eqref{eq:onesquare} is $\sum_u\lambda^2_u$.
\end{proof}

For a square or sum of
squares to be free of $\sigma^2_{\cd}$
it is necessary to have $\sum_{r=1}^R\sum_u\lambda_{r,u}^2=0$.
That in turn requires all the $\lambda_{r,u}$ to vanish,
leading to the degenerate case $\Omega=0$.
As a result, we cannot get an unbiased
sum of squares for any GSI that does not
include $\sigma^2_{\cd}$. In particular,
$\ult^2_u$ cannot have an unbiased
sum of squares estimate for $|u|<d$.


\section{Specific variance components}\label{sec:specific}

For any $w\subseteq\cd$ the variance component
for $w$ is given in~\eqref{eq:moebius}
as an alternating sum of $2^{|w|}$
lower Sobol' indices.
It can thus be estimated by a simple GSI,
\begin{align}\label{eq:simplevarcomp}
f(\bsx)\sum_{v\subseteq w}\lambda_vf(\bsx_{v}\glu\bsz_{-v})
\end{align}
where $\lambda_v = (-1)^{|w-v|}$.
The cost of this simple GSI is $C=2^{|w|}+1_{|w|<d}$.
If $w=\cd$, then $f(\bsx)$ appears twice, but
otherwise it is only used once.
The GSI can also be estimated by some bilinear GSIs
using fewer function evaluations as we show here.

We begin by noting that for $u,v\subseteq w$,
\begin{align*}
\nxor(u,v+w^c) = (\xor(u,v)+w^c)^c = \nxor(u,v)\cap w
\end{align*}
and so 
$$\Theta_{u,v+w^c} = \mu^2+\ult^2_{\nxor(u,v)\cap w}$$
does not involve any of the variables $x_j$ for $j\not\in w$.
Note especially that $\nxor(u,v)$ itself contains all of $w^c$
and is {\sl not} helpful in estimating $\sigma^2_w$ when 
$|w|<d$.

To illustrate, suppose that $w=\{1,2,3\}$.
Let $u,v\subseteq w$. Then we can
work out a $2\times 8$
submatrix of the Sobol' matrix using
\begin{align}\label{eq:2x8sub}
\kbordermatrix{\nxor(u,v+w^c) &\emptyset&1&2&3&12&13&23&123\\
\emptyset & 123 & 23  & 13 & 12 & 3 & 2 & 1 & \emptyset\\
1         &  23 & 123 & 3  & 2  & 13& 12 & \emptyset & 1\\
}
\end{align}
where we omit braces and commas from the set notation.

It follows now that we can use a non-simple bilinear GSI,
$\lambda^\tran\Theta\gamma$, where 
$\lambda$ and $\gamma$ are given by
\begin{align}\label{eq:alttriple}
\kbordermatrix{ &\emptyset&\phm1&\phm2&\phm3&\phm12&\phm13&\phm23&\phm123\\
\lambda_u  & 1 & -1     & \phm0  & \phm0  & \phm0& \phm0 &\phm0 & \phm0\\
\gamma_v   & 1 & \phm0  & -1 & -1 & \phm0 & \phm0 & \phm1 & \phm0\\
}
\end{align}
with $u$ and $v-w^c$ given along the top labels
in~\eqref{eq:alttriple} while the 
remaining $2^d-8$ elements of $\lambda$
and $\gamma$ are all zero.
Specifically, the expected value of
\begin{align}\label{eq:bilin1}
\sum_{u\subseteq\{1\}}\sum_{v\subseteq\{2,3\}}
(-1)^{|u|+|v|}
f(\bsx_u\glu\bsz_{-u})f(\bsx_{v+w^c}\glu\bsz_{v^c-w})
\end{align}
is $\sigma^2_{\{1,2,3\}}$.
While equation~\eqref{eq:simplevarcomp} for $w=\{1,2,3\}$
requires $9$ function
evaluations per $(\bsx,\bsz)$ pair, equation~\eqref{eq:bilin1}
only requires $6$ function evaluations. For $|w|<d$, the $u=\emptyset$
and $v=\emptyset$ evaluations are different due to
the presence of $w^c$, so no
evaluations are common to both the $\lambda$ and $\gamma$
expressions.
There are also two variants of~\eqref{eq:alttriple}
that single out variables
$2$ and $3$ respectively, analogously to the
way that~\eqref{eq:bilin1} treats variable~$1$.

In general, bilinear GSIs let us
estimate $\sigma^2_w$ using $2^k+2^{|w|-k}$
function evaluations per $(\bsx,\bsz)$ pair
for integer $1\le k<|w|$ instead of
the $2^{|w|}$ evaluations that a simple GSI
requires.

\begin{theorem}\label{thm:bilinvarcomp}
Let $w$ be a nonempty subset of $\cd$ for $d\ge1$.
Let $f\in L^2[0,1]^d$. Choose $w_1\subseteq w$
and put $w_2=w-w_1$.
Then
\begin{align}\label{eq:bilshortcut}
\sigma^2_w =
\sum_{u_1\subseteq w_1}\sum_{u_2\subseteq w_2}
(-1)^{|u_1|+|u_2|}
\iint f(\bsx_{u_1}\glu\bsz_{-u_1})
f(\bsx_{u_2+w^c}\glu\bsz_{u_2^c-w})\rd\bsx\rd\bsz.
\end{align}
\end{theorem}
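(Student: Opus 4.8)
The plan is to recognize the double sum on the right-hand side of~\eqref{eq:bilshortcut} as a signed combination of Sobol' matrix entries and then evaluate each entry with Theorem~\ref{thm:withnxor}. Writing $u=u_1$ and $v=u_2+w^c$, the generic double integral is exactly the entry $\Theta_{u_1,\,u_2+w^c}$, so the right-hand side equals $\sum_{u_1\subseteq w_1}\sum_{u_2\subseteq w_2}(-1)^{|u_1|+|u_2|}\Theta_{u_1,\,u_2+w^c}$. By Theorem~\ref{thm:withnxor} each such entry is $\mu^2+\ult^2_{\nxor(u_1,\,u_2+w^c)}$, so I would split the sum into a $\mu^2$ part and a variance-component part and handle them separately.

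For the set index I would first invoke the identity $\nxor(u_1,u_2+w^c)=\nxor(u_1,u_2)\cap w$, which holds for $u_1,u_2\subseteq w$ and is noted just before~\eqref{eq:2x8sub}. Since $u_1\subseteq w_1$ and $u_2\subseteq w_2$ lie in disjoint sets, $u_1\cap u_2=\emptyset$, whence $\nxor(u_1,u_2)=(u_1\cup u_2)^c$ and therefore $\nxor(u_1,u_2+w^c)=w\setminus(u_1\cup u_2)$. The crucial bookkeeping step is then the bijection: because $w_1$ and $w_2$ partition $w$, a pair $(u_1,u_2)$ with $u_1\subseteq w_1$ and $u_2\subseteq w_2$ corresponds one-to-one with the single set $s=u_1\cup u_2$ ranging over all subsets of $w$, and $|u_1|+|u_2|=|s|$.

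With this reindexing, the variance-component part becomes $\sum_{s\subseteq w}(-1)^{|s|}\ult^2_{w\setminus s}$; substituting $t=w\setminus s$ converts it to $\sum_{t\subseteq w}(-1)^{|w-t|}\ult^2_t$, which is precisely the Moebius-type expression~\eqref{eq:moebius} for $\sigma^2_w$. It then remains to check that the $\mu^2$ part contributes nothing: its coefficient factors as $\bigl(\sum_{u_1\subseteq w_1}(-1)^{|u_1|}\bigr)\bigl(\sum_{u_2\subseteq w_2}(-1)^{|u_2|}\bigr)$, and since $w=w_1+w_2$ is nonempty at least one of $w_1,w_2$ is nonempty, making the corresponding alternating binomial sum vanish.

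The only genuine obstacle is the set-theoretic identity for $\nxor(u_1,u_2+w^c)$, combined with the observation that $u_1\cap u_2=\emptyset$; once $\nxor(u_1,u_2+w^c)=w\setminus(u_1\cup u_2)$ is established, the rest is the partition-to-subset bijection and a direct appeal to the Moebius relation~\eqref{eq:moebius}, both of which are routine.
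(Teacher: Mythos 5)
Your proof is correct and follows the same backbone as the paper's: identify each double integral as the Sobol' matrix entry $\Theta_{u_1,\,u_2+w^c}$, apply Theorem~\ref{thm:withnxor}, and reduce the index set via $\nxor(u_1,u_2+w^c)=\nxor(u_1,u_2)\cap w=w-(u_1+u_2)$. The only divergence is in the final combinatorial step: the paper expands each $\ult^2$ into its constituent variance components and computes the coefficient of every $\sigma^2_v$ by alternating sums over the intervals $v_j\subseteq u_j\subseteq w_j$, whereas you collapse the double sum through the bijection $(u_1,u_2)\leftrightarrow s=u_1\cup u_2$ (valid because $w_1,w_2$ partition $w$) and then cite the Moebius relation~\eqref{eq:moebius} directly. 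Your route is shorter but leans on~\eqref{eq:moebius}, which the paper states without proof; the paper's in-line coefficient computation is self-contained and amounts to re-deriving that inversion formula on the spot. Your explicit factorization of the $\mu^2$ coefficient into two alternating binomial sums, one of which vanishes since $w\ne\emptyset$, correctly substantiates the paper's one-line assertion that the right-hand side is a contrast.
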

\begin{proof}
Because the right hand side of~\eqref{eq:bilshortcut} is
a contrast, we can assume that $\mu=0$. Then
$\nxor(u_1,u_2+w^c) =\nxor(u_1,u_2)\cap w
=w-(u_1+u_2)$. Therefore
\begin{align*}
\phantom{=}\sum_{u_1\subseteq w_1}\sum_{u_2\subseteq w_2}
(-1)^{|u_1|+|u_2|}\Theta_{u_1,u_2+w^c}\notag
&=\sum_{u_1\subseteq w_1}\sum_{u_2\subseteq w_2}
(-1)^{|u_1|+|u_2|}\,\ult^2_{w-(u_1+u_2)}\\
&=\sum_{u_1\subseteq w_1}\sum_{u_2\subseteq w_2}
(-1)^{|w_1-u_1|+|w_2-u_2|}\,\ult^2_{u_1+u_2}
\end{align*}
after a change of variable from $u_j$ to $w_j-u_j$ for $j=1,2$.
We may write the above as
\begin{align}\label{eq:shortcutexpr}
\sum_{u_1\subseteq w_1}\sum_{u_2\subseteq w_2}
(-1)^{|w_1-u_1|+|w_2-u_2|}\,\sum_{v\subseteq u_1+u_2}\sigma^2_v.
\end{align}

Consider the set $v\subseteq\cd$. The coefficient
of $\sigma^2_v$ in~\eqref{eq:shortcutexpr} is $0$
if $v\cap w^c\ne\emptyset$. Otherwise, we may write
$v=v_1+v_2$ where $v_j\subseteq w_j$, $j=1,2$.
Then the coefficient of $\sigma^2_v$ in~\eqref{eq:shortcutexpr} is
\begin{align*}
&\sum_{u_1\subseteq w_1}\sum_{u_2\subseteq w_2}
(-1)^{|w_1-u_1|+|w_2-u_2|}1_{v\subseteq u_1+u_2}\\
&=
\sum_{u_1: v_1\subseteq u_1\subseteq w_1}(-1)^{|w_1-u_1|}
\sum_{u_2: v_2\subseteq u_2\subseteq w_2}(-1)^{|w_2-u_2|}.
\end{align*}
These alternating sums over $u_j$ with
$v_j\subseteq u_j\subseteq w_j$ equal
$1$ if $v_j=w_j$ but otherwise they are zero.
Therefore the coefficient of $\sigma^2_v$ in~\eqref{eq:shortcutexpr}
is $1$ if $v=w$ and is $0$ otherwise.
\end{proof}

We can use
Theorem~\ref{thm:bilinvarcomp} to get a bilinear
(but not square)
estimator of $\sigma^2_{\cd} = \Upsilon^2_{\cd}$.
A similar argument to that in Theorem~\ref{thm:bilinvarcomp}
yields a bilinear estimator of superset importance
$\Upsilon^2_w$ for a general set $w$.

\begin{theorem}\label{thm:bilinsuper}
Let $w$ be a nonempty subset of $\cd$ for $d\ge1$.
Let $f\in L^2[0,1]^d$. Choose $w_1\subseteq w$
and put $w_2=w-w_1$.
Then
\begin{align}\label{eq:bilshortcut2}
\Upsilon^2_w =
\sum_{u_1\subseteq w_1}\sum_{u_2\subseteq w_2}
(-1)^{|u_1|+|u_2|}
\Theta_{w^c+u_1,w^c+u_2}.
\end{align}
\end{theorem}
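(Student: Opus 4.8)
The plan is to follow the template of the proof of Theorem~\ref{thm:bilinvarcomp} almost verbatim, replacing the variance-component bookkeeping with superset bookkeeping. First I would note that the right-hand side of~\eqref{eq:bilshortcut2} is a contrast: the coefficients factor as $\bigl(\sum_{u_1\subseteq w_1}(-1)^{|u_1|}\bigr)\bigl(\sum_{u_2\subseteq w_2}(-1)^{|u_2|}\bigr)$, and since $w$ is nonempty at least one of $w_1,w_2$ is nonempty, so one factor vanishes. Being a contrast, the sum is unaffected by $\mu$, so I may assume $\mu=0$ and invoke Theorem~\ref{thm:withnxor} in the form $\Theta_{w^c+u_1,\,w^c+u_2}=\ult^2_{\nxor(w^c+u_1,\,w^c+u_2)}$.

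The next step is the key set computation. With $u_1\subseteq w_1$ and $u_2\subseteq w_2$, the sets $u_1$, $u_2$, and $w^c$ are pairwise disjoint. Writing $A=w^c+u_1$ and $B=w^c+u_2$, I would use that $\nxor$ is the complement of $\xor$: here $A\cap B=w^c$ and $A\cup B=w^c+u_1+u_2$, so $\xor(A,B)=u_1+u_2$ and therefore $\nxor(A,B)=\cd-(u_1+u_2)$. Hence each entry reduces to $\ult^2_{\cd-(u_1+u_2)}=\sum_{t\cap(u_1+u_2)=\emptyset}\sigma^2_t$.

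Finally I would collect the coefficient of a fixed $\sigma^2_v$. Exchanging the order of summation, this coefficient is $\sum_{u_1\subseteq w_1}\sum_{u_2\subseteq w_2}(-1)^{|u_1|+|u_2|}\,1_{v\cap(u_1+u_2)=\emptyset}$, which factors into $\bigl(\sum_{u_1\subseteq w_1-v}(-1)^{|u_1|}\bigr)\bigl(\sum_{u_2\subseteq w_2-v}(-1)^{|u_2|}\bigr)$ because the indicator splits as $1_{v\cap u_1=\emptyset}1_{v\cap u_2=\emptyset}$. Each alternating sum equals $1$ when its index set is empty and $0$ otherwise, so the product is $1$ exactly when $w_1\subseteq v$ and $w_2\subseteq v$, that is, when $w\subseteq v$, and $0$ otherwise. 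The surviving terms are precisely $\sum_{v\supseteq w}\sigma^2_v=\Upsilon^2_w$, as desired.

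I expect the only real obstacle to be getting the $\nxor$ computation right, since it is easy to confuse $\nxor(w^c+u_1,w^c+u_2)$ with the expression $\nxor(u_1,u_2+w^c)=w-(u_1+u_2)$ used in Theorem~\ref{thm:bilinvarcomp}; here both arguments carry $w^c$, which flips the role of $w$ and $w^c$ and yields $\cd-(u_1+u_2)$ rather than $w-(u_1+u_2)$. Once that identity is in hand, the remaining algebra is the same factored alternating-sum argument as before, now selecting the supersets of $w$ instead of the single set $w$.
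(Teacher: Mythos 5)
Your proposal is correct and follows essentially the same route as the paper's proof: assume $\mu=0$ since the expression is a contrast, compute $\nxor(w^c+u_1,w^c+u_2)=\cd-(u_1+u_2)$, expand $\ult^2$ into variance components, and evaluate the coefficient of each $\sigma^2_v$ via factored alternating sums that vanish unless $w_1\subseteq v$ and $w_2\subseteq v$. The only cosmetic difference is that the paper decomposes $v=(v\cap w^c)+v_1+v_2$ before factoring the indicator, whereas you keep $v$ intact; the computation is the same.
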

\begin{proof}
Because $w\ne\emptyset$, the estimate is a 
contrast and so we may suppose $\mu=0$.
For disjoint $u_1,u_2\subseteq w$,
$\nxor(w^c+u_1,w^c+u_2)
= \cd-u_1-u_2$,
and so the right side of~\eqref{eq:bilshortcut2} equals
\begin{align*}
\sum_{u_1\subseteq w_1}\sum_{u_2\subseteq w_2}
(-1)^{|u_1|+|u_2|}\ult^2_{\cd-u_1-u_2}
&=\sum_{u_1\subseteq w_1}\sum_{u_2\subseteq w_2}
(-1)^{|u_1|+|u_2|}\sum_{v\subseteq\cd-u_1-u_2}\sigma^2_v\\
&=\sum_{v}\sigma^2_v
\sum_{u_1\subseteq w_1}\sum_{u_2\subseteq w_2}
(-1)^{|u_1|+|u_2|}1_{v\subseteq\cd-u_1-u_2}.
\end{align*}
Now write $v=(v\cap w^c)+v_1+v_2$ with $v_j\subseteq w_j$, $j=1,2$.
The coefficient of $\sigma^2_v$ is
\begin{align*}
\sum_{u_1\subseteq w_1}\sum_{u_2\subseteq w_2}
(-1)^{|u_1|+|u_2|}1_{u_1\cap v_1=\emptyset}1_{u_2\cap v_2=\emptyset}.
\end{align*}
Now
\begin{align*}
\sum_{u_1\subseteq w_1}(-1)^{|u_1|}1_{u_1\cap v_1=\emptyset}
&=
\sum_{u_1\subseteq w_1-v_1}(-1)^{|u_1|}
\end{align*}
which vanishes unless $w_1=v_1$ and otherwise
equals $1$. Therefore the coefficient
of $\sigma^2_v$ is $1$ if $v\supseteq w$
and is $0$ otherwise.
\end{proof}

The cost of the estimator~\eqref{eq:bilshortcut2}
is $C = 2^{|w_1|}+2^{|w_2|}-1$, because the
evaluation  $f(\bsx_{w^c}\glu\bsz_w)$ can be
used for both $u_1=\emptyset$ and $u_2=\emptyset$.

\section{GSIs with $O(d)$ function evaluations per pair}\label{sec:od}

Some problems, like computing mean dimension,
can be solved with $O(d)$ different integrals
instead of the $O(2^d)$ required to estimate
all ANOVA components.
In this section we enumerate what can be
estimated by certain GSIs based on only
$O(d)$ carefully chosen 
function evaluations per $(\bsx_i,\bsz_i)$
pair.

\subsection{Cardinality restricted GSIs}
One way to reduce function evaluations
to $O(d)$ is to consider only subsets
$u$ and $v$ with cardinality $0$,
$1$, $d-1$, or $d$.
We suppose for $d\ge2$ that $j$ and $k$
are distinct elements of~$\cd$.
Letting $j$ and  $k$ substitute
for $\{j\}$ and $\{k\}$ respectively
we can enumerate $\nxor(u,v)$ for
all of these subsets as follows:
\begin{align*}
\kbordermatrix{\nxor &\emptyset&j&k&-j&-k&\cd\\
\emptyset & \cd & -j & -k & j & k &\emptyset\\
j   &  -j & \cd & -\{j,k\} & \emptyset & \phm\{j,k\} & j\\
-j   &  j & \emptyset & \phm\{j,k\} & \cd & -\{j,k\} &-j\\
\cd & \emptyset & j & k& -j & -k & \cd},
\end{align*}
and hence the accessible elements of the Sobol' matrix are:
\newcommand{\someex}{0.5ex}
\begin{align}\label{eq:availcard}
\kbordermatrix{\Theta_{uv}-\mu^2 &\emptyset&j&k&-j&-k&\cd\\[\someex]
\emptyset & \sigma^2 & \ult^2_{-j} & \ult^2_{-k} & \ult^2_j & \ult^2_k &0\\[\someex]
j   &  \ult^2_{-j} & \sigma^2 & \ult^2_{-\{j,k\}} & 0 & \ult^2_{\{j,k\}} & \ult^2_j\\[\someex]
-j   &  \ult^2_j & 0 & \ult^2_{\{j,k\}} & \sigma^2 & \ult^2_{-\{j,k\}} &\ult^2_{
-j}\\[\someex]
\cd & 0 & \ult^2_j & \ult^2_k& \ult^2_{-j} & \ult^2_{-k} & \sigma^2}.
\end{align}

Using~\eqref{eq:availcard} we can construct
estimates of $\sum_u|u|\sigma^2_u = \sum_j\olt^2_{\{j\}}$
and $\sum_{|u|=1}\sigma^2_u=\sum_j\ult^2_{\{j\}}$ at cost $C=d+1$.
Simple GSI estimates
are available using $u = \emptyset$
and either $v=\{j\}$ or $v=-\{j\}$ for $j=1,\dots,d$.
More interestingly, it is possible to compute
all $d(d-1)/2$ indices $\olt^2_{\{j,k\}}$
along with all $\ult^2_{\{j\}}$
and $\olt^2_{\{j\}}$ for $j=1,\dots,d$, at
total cost $C=d+2$ as was first shown
by~\citet[Theorem 1]{salt:2002}.
Given $C=2d+2$ evaluations one can also compute
all of the $\ult^2_{\{j,k\}}$ indices by 
pairing up $u=\{j\}$ and $v=-\{k\}$
\cite[Theorem 2]{salt:2002}.

For the remainder of this section we present
some contrast estimators.
The estimate
$$
\frac1{2n}\sum_{j=1}^d 
\bigl( f(\bsx)-f(\bsx_{\{-j\}}\glu\bsz_{\{j\}})\bigr)^2.
$$
is both a contrast and a sum of squares.  It has
expected value  $\sum_u|u|\sigma^2_u$
and cost $C=d+1$.

Next, to estimate
$\sum_{u}1_{|u|=1}\sigma^2_u$ 
by a contrast using
$d+2$ function evaluations per $(\bsx_i,\bsz_i)$
pair, let
$$\lambda_u = 
\begin{cases}
1, & |u|=1\\
-d, & |u|=d.
\end{cases}
$$
Then the contrast $\sum_u\lambda_u f(\bsx_u\glu\bsz_{-u})f(\bsz)$
has expected value 
$$\sum_{j=1}^d\ult^2_{\{j\}}
=\sum_{j=1}^d\sigma^2_{\{j\}}
=\sum_{|u|=1}\sigma^2_u.$$

The total of second order interactions
can be estimated with a contrast at cost $C=2d+2$.
Taking 
\begin{align*}
\lambda_u = 
\begin{cases}
1, & |u| = 1\\
-d, & |u| = 0\\
0,  &\text{else,}
\end{cases}\quad\text{and}\quad
\gamma_v = 
\begin{cases}
1, & |v| = d-1\\
-(d-2), & |v| = d\\
0, & \text{else}
\end{cases}
\end{align*}
we get a contrast with
\begin{align*}
\lambda^\tran\Theta\gamma & =
\sum_{j=1}^d\sum_{k=1}^d \ult^2_{\{j,k\}}1_{j\ne k}
-d\sum_{k=1}^d\ult^2_{\{k\}}
-(d-2)\sum_{j=1}^d\ult^2_{\{j\}}\\
& = 2\sum_{|u|=2}\ult^2_u-(2d-2)\sum_{|u|=1}\ult^2_u
 = 2\sum_{|u|=2}\sigma^2_u.
\end{align*}
Thus $\lambda^\tran\wh\Theta\gamma/2$ estimates
$\sum_{|u|=2}\sigma^2_u$, at cost $C=2d+2$.

Next, taking 
\begin{align*}
\lambda_u = 
\begin{cases}
1, & |u| = 1\\
-d, & |u| = 0\\
0,  &\text{else,}
\end{cases}\quad\text{and}\quad
\gamma_v = 
\begin{cases}
1, & |v| = 1\\
-(d-2), & |v| = 0\\
0, & \text{else}
\end{cases}
\end{align*}
we get a contrast with
\begin{align*}
\lambda^\tran\Theta\gamma & =
\sum_{j=1}^d\sum_{k=1}^d \ult^2_{-\{j,k\}}1_{j\ne k}
+\sum_{j=1}^d\sigma^2
-d\sum_{k=1}^d\ult^2_{-\{k\}}
-(d-2)\sum_{j=1}^d\ult^2_{-\{j\}}+d(d-2)\sigma^2\\
& =
d^2\sigma^2-\sum_{j=1}^d\sum_{k=1}^d \olt^2_{\{j,k\}}1_{j\ne k} 
-2d(d-1)\sigma^2+2(d-1)\sum_{j=1}^d\olt^2_{\{j\}}+d(d-2)\sigma^2\\
& =2(d-1)\sum_{j=1}^d\olt^2_{\{j\}}
-\sum_{j=1}^d\sum_{k=1}^d \olt^2_{\{j,k\}}1_{j\ne k} 
=2\sum_{u}|u|^2\sigma^2_{u},
\end{align*}
using Theorem~\ref{thm:dimmoments}.
Therefore $\e\bigl( \lambda^\tran\wh\Theta\gamma/2\bigr)
= \sum_{u}|u|^2\sigma^2_u$, at cost $C=d+1$.

\subsection{Consecutive index GSIs}
A second way to reduce function evaluations
to $O(d)$ is to consider only subsets
$u$ and $v$ of the form $\{1,2,\dots,j\}$
and $\{j+1,\dots,d\}$.
We write these as $(0,j]$ and $(j,d]$ respectively.
If $f(\bsx)$ is the result of a process evolving
in discrete time then $(0,j]$ represents the
effects of inputs up to time $j$ and
$(j,d]$ represents those after time $j$.
A small value of $\olt^2_{(0,j]}$ then
means that the first $j$ inputs are nearly forgotten
while a large value for $\ult^2_{(0,j]}$ means
the initial conditions have a lasting effect.

We suppose for $d\ge2$ that $1\le j<k\le d$.
Once again, we can enumerate $\nxor(u,v)$ for
all of the subsets of interest:
\begin{align}\label{eq:nxorconsec}
\kbordermatrix{\nxor &\emptyset&(0,j]&(0,k]&(j,d]&(k,d]&\cd\\
\emptyset & \cd & (j,d] & \phm(k,d] & (0,j] & \phm(0,k] &\emptyset\\
(0,j]   &  (j,d] & \cd & -(j,k] & \emptyset & \phm(j,k] & (0,j]\\
(j,d]   &  (0,j] & \emptyset & \phm(j,k] & \cd & -(j,k] & (j,d]\\
\cd & \emptyset & (0,j] & \phm(0,k]& (j,d] & \phm(k,d] & \cd},
\end{align}
and hence the accessible elements of the Sobol' matrix are
$\mu^2+\ult^2_u$ for the sets $u$ in the array above.

The same strategies used on singletons and their
complements can be applied to 
consecutive indices. They yield interesting
quantities related to mean dimension in
the truncation sense. To describe them,
we write 
$\lfloor u\rfloor = \min\{ j \mid j\in u\}$
and
$\lceil u\rceil = \max\{ j \mid j\in u\}$
for the least and greatest indices in the
non-empty set $u$.

\begin{proposition}\label{prop:meanupdn}
Let $f\in L^2[0,1]^d$ have variance components $\sigma^2_u$.
Then
\begin{align*}
\sum_{j=1}^{d-1}\bigl(\Theta_{(0,j],\cd} -\Theta_{\emptyset,\cd}\bigr)
& = \sum_{u\subseteq \cd}(d-\lceil u\rceil)\sigma^2_u,\quad\text{and,}\\
\sum_{j=1}^{d-1}\bigl(\Theta_{(j,d],\cd} -\Theta_{\emptyset,\cd}\bigr)
&= \sum_{u\subseteq \cd}(\lfloor u\rfloor-1)\sigma^2_u.
\end{align*}
\end{proposition}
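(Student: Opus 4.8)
The plan is to evaluate each summand on the left using Theorem~\ref{thm:withnxor} together with the $\nxor$ table in~\eqref{eq:nxorconsec}, and then to convert the resulting sum of lower Sobol' indices into a weighted sum of variance components by exchanging the order of summation and counting.

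First I would read off from~\eqref{eq:nxorconsec} that $\nxor((0,j],\cd)=(0,j]$ and $\nxor(\emptyset,\cd)=\emptyset$. Theorem~\ref{thm:withnxor} then gives $\Theta_{(0,j],\cd}=\mu^2+\ult^2_{(0,j]}$ and $\Theta_{\emptyset,\cd}=\mu^2+\ult^2_\emptyset=\mu^2$, since $\ult^2_\emptyset=0$. Hence each difference collapses to $\ult^2_{(0,j]}=\sum_{v\subseteq(0,j]}\sigma^2_v$, and the $\mu^2$ terms cancel, as they must, since the left-hand side is a contrast (its coefficients sum to $(d-1)-(d-1)=0$). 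The first sum thus becomes $\sum_{j=1}^{d-1}\sum_{v\subseteq\{1,\dots,j\}}\sigma^2_v$.

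Next I would exchange the order of summation and count, for each fixed $u$, the number of indices $j\in\{1,\dots,d-1\}$ for which $u\subseteq\{1,\dots,j\}$. For nonempty $u$ this condition reads $\lceil u\rceil\le j$, so $j$ ranges over $\lceil u\rceil,\dots,d-1$, giving $d-\lceil u\rceil$ admissible values; the coefficient of $\sigma^2_u$ is therefore $d-\lceil u\rceil$, which is precisely the claimed weight. The second identity is handled identically, using $\nxor((j,d],\cd)=(j,d]$ so that the inner sum runs over $v\subseteq\{j+1,\dots,d\}$; here $\sigma^2_u$ is picked up exactly when $\lfloor u\rfloor\ge j+1$, i.e.\ for $j=1,\dots,\lfloor u\rfloor-1$, yielding the weight $\lfloor u\rfloor-1$.

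The only delicate point, and the step I would double-check, is the edge-case bookkeeping that makes the formulas agree on the boundary. In the first identity, sets $u$ with $d\in u$ have $\lceil u\rceil=d$ and should receive weight $0$; indeed no $j\le d-1$ contains such a $u$, so it never appears, matching $d-\lceil u\rceil=0$. In the second identity, sets with $\lfloor u\rfloor=1$ should likewise receive weight $0$, and no admissible $j$ contributes them. Finally $u=\emptyset$ contributes nothing either way because $\sigma^2_\emptyset=0$, so it is harmless that $\lceil\emptyset\rceil$ and $\lfloor\emptyset\rfloor$ are undefined. Once these boundary cases are confirmed, the weighted sums coincide with the right-hand sides and the proposition follows.
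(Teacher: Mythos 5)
Your proof is correct and follows essentially the same route as the paper: both reduce each entry to $\mu^2+\ult^2_{\nxor(u,v)}$ via Theorem~\ref{thm:withnxor} and then finish with a counting argument over $j$. The only cosmetic difference is that you count $\#\{j: u\subseteq(0,j]\}=d-\lceil u\rceil$ directly from the lower index, whereas the paper first rewrites $\ult^2_{(0,j]}$ as $\sigma^2-\olt^2_{(j,d]}$ and counts $\#\{j: u\cap(j,d]\ne\emptyset\}=\lceil u\rceil-1$; these are complementary counts and your version is, if anything, slightly more direct.
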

\begin{proof}
Since these are contrasts, we may suppose that $\mu=0$.
Then, using~\eqref{eq:nxorconsec}
\begin{align*}
\sum_{j=1}^{d-1}\Theta_{(0,j],\cd}
& 
= \sum_{j=1}^{d-1}\ult^2_{(0,j]}
= (d-1)\sigma^2-\sum_{j=1}^{d-1}\olt^2_{(j,d]}.
\end{align*}
Next, $\Theta_{\emptyset,\cd}=0$, and
\begin{align*}
\sum_{j=1}^{d-1} \olt^2_{(j,d]}
= \sum_{u\subseteq\cd}\sigma^2_u\sum_{j=1}^{d-1}1_{\{j+1,\dots,d\}\cap u\ne\emptyset}
= \sum_{u\subseteq \cd}(\lceil u\rceil-1)\sigma^2_u.
\end{align*}
Combining these yields the first result. The second
is similar.
\end{proof}

Using Proposition~\ref{prop:meanupdn} we can
obtain an estimate of 
$\sum_u\lceil u\rceil\sigma^2_u/\sigma^2$,
the mean dimension  of $f$ in the
truncation sense. We also obtain a contrast
$$
\sum_{j=1}^{d-1}(\Theta_{\cd,\cd}-\Theta_{(0,j],\cd}-\Theta_{(j,d],\cd}
+2\Theta_{\emptyset,\cd})
= \sum_{u}(\lceil u\rceil-\lfloor u\rfloor)\sigma^2_u
$$
which measures the extent to which
indices at distant time lags contribute important interactions.

We can also construct GSIs based on pairs of segments.
For example,
\begin{align*}
\sum_{j=0}^{d-1}\sum_{k=j+1}^d\Theta_{(0,j],(k,d]}
-\frac{d(d-1)}2\mu^2
&=
\sum_u\sigma^2_u\sum_{j=0}^{d-1}\sum_{k=j+1}^d
1_{u\subseteq (j,k]}\\
&=
\sum_u\sigma^2_u\lfloor u\rfloor\bigl(d-\lceil u\rceil+1\bigr).
\end{align*}

\section{Bias corrected GSIs}\label{sec:biasc}

When we are interested in estimating
a linear combination of variance components,
then the corresponding GSI is a contrast.
Sometimes estimating a contrast requires
an additional function evaluation
per $(\bsx_i,\bsz_i)$ pair.
For instance the unbiased
estimator~\eqref{eq:mauntz} of $\ult^2_u$ requires three
function evaluations per pair compared to the two required
by the biased estimator of~\cite{jano:klei:lagn:node:prie:2012:tr}.

Proposition~\ref{prop:janonunb}
supplies a bias-corrected version of
\nocite{jano:klei:lagn:node:prie:2012:tr}
Janon et al.'s (2011) estimator of $\ult^2_u$
using only two function evaluations per $(\bsx_i,\bsz_i)$
pair.

\begin{proposition}\label{prop:janonunb}
Let $f\in L^2[0,1]^d$ and suppose that 
$\bsx_i,\bsz_i\simiid\dustd(0,1)^d$
for $i=1,\dots,n$ where $n\ge 2$.
Let $\bsy_i = \bsx_{i,u}\glu\bsz_{i,-u}$
and define 
\begin{align*}
\hat\mu & =\frac1n\sum_{i=1}^nf(\bsx_i),
&\hat\mu'& = \frac1n\sum_{i=1}^nf(\bsy_i),\\
s^2 & = \frac1{n-1}\sum_{i=1}^n(f(\bsx_i)-\hat\mu)^2,\quad\text{and}
&{s'}^{2} & = \frac1{n-1}\sum_{i=1}^n(f(\bsy_i)-\hat\mu')^2.
\end{align*}
Then
$\e\bigl( \wt\ult_u^2\bigr) = \ult_u^2$
where
$$
\wt\ult_u^2 = 
\frac{2n}{2n-1}
\biggl(
\frac1n\sum_{i=1}^nf(\bsx_i)f(\bsy_i)
- \Bigl(\frac{\hat\mu+\hat\mu'}2\Bigr)^2
+\frac{s^2+{s'}^2}{4n}
\biggr)
$$
\end{proposition}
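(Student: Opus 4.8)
The plan is to reduce everything to elementary moments of the scalar pair $(X_i,Y_i):=(f(\bsx_i),f(\bsy_i))$ and then take expectations term by term. First I would record the first two moments of this pair. Since $\bsy_i=\bsx_{i,u}\glu\bsz_{i,-u}$ is itself uniformly distributed on $[0,1]^d$, we have $\e X_i=\e Y_i=\mu$ and $\var(X_i)=\var(Y_i)=\sigma^2$. For the cross moment, note that $f(\bsx_i)=f(\bsx_{i,\cd}\glu\bsz_{i,-\cd})$, so $\e(X_iY_i)=\Theta_{\cd,u}$; and since $\nxor(\cd,u)=u$, Theorem~\ref{thm:withnxor} gives $\e(X_iY_i)=\mu^2+\ult^2_u$, i.e.\ $\cov(X_i,Y_i)=\ult^2_u$. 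The pairs $(X_i,Y_i)$ are i.i.d.\ across $i$.

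Next I would evaluate the expectation of each of the three terms inside the parentheses. The cross-product term satisfies $\e\bigl(\tfrac1n\sum_i X_iY_i\bigr)=\mu^2+\ult^2_u$. For the squared-mean term, the key observation is that $(\hat\mu+\hat\mu')/2=\tfrac1n\sum_i W_i$ is the sample mean of the i.i.d.\ variables $W_i:=(X_i+Y_i)/2$, which have mean $\mu$ and, using the covariance just computed, variance $\var(W_i)=\tfrac14(\sigma^2+\sigma^2+2\ult^2_u)=(\sigma^2+\ult^2_u)/2$. Hence $\e\bigl[((\hat\mu+\hat\mu')/2)^2\bigr]=\mu^2+\var(W_i)/n=\mu^2+(\sigma^2+\ult^2_u)/(2n)$. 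Finally, $s^2$ and ${s'}^2$ are ordinary Bessel-corrected sample variances, so $\e(s^2)=\e({s'}^2)=\sigma^2$ and $\e\bigl[(s^2+{s'}^2)/(4n)\bigr]=\sigma^2/(2n)$.

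Combining the three expectations, the $\mu^2$ terms cancel and the two $\sigma^2/(2n)$ contributions cancel, leaving $\ult^2_u-\ult^2_u/(2n)=\ult^2_u(2n-1)/(2n)$. Multiplying by the prefactor $2n/(2n-1)$ then yields $\e(\wt\ult_u^2)=\ult^2_u$, completing the proof. There is no genuinely hard step here; the only place demanding care is the computation of $\var(W_i)$, where the covariance term $\ult^2_u$ must be retained. It is precisely this term that survives as the $O(1/n)$ bias, whereas the sample-variance correction $(s^2+{s'}^2)/(4n)$ is engineered to cancel exactly the $\sigma^2/(2n)$ piece of $\e\bigl[((\hat\mu+\hat\mu')/2)^2\bigr]$. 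Getting the bookkeeping of these $1/(2n)$ terms right is the whole content of the result.
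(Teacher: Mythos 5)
Your proof is correct and follows essentially the same route as the paper's: compute the expectations of the three terms, using $\e(f(\bsx_i)f(\bsy_i))=\mu^2+\ult^2_u$, the expansion of $\e\bigl[((\hat\mu+\hat\mu')^2)\bigr]$ via the covariance $\cov(\hat\mu,\hat\mu')=\ult^2_u/n$, and $\e(s^2)=\e({s'}^2)=\sigma^2$. Packaging $(\hat\mu+\hat\mu')/2$ as the sample mean of $W_i=(X_i+Y_i)/2$ is only a cosmetic repackaging of the paper's variance--covariance expansion, and your bookkeeping of the $1/(2n)$ terms matches the paper's exactly.
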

\begin{proof}
First $\e( f(\bsx_i)f(\bsy_i) )=\mu^2+\ult^2_u$.
Next 
\begin{align*}
\e( (\hat\mu+\hat\mu')^2)
&= 4\mu^2+\var(\hat\mu)+\var(\hat\mu')+2\cov(\hat\mu,\hat\mu')\\
&= 4\mu^2+2\frac{\sigma^2}n+2\frac{\ult_u^2}{n}.
\end{align*}
Finally $\e(s^2) = \e({s'}^2)=\sigma^2$. Putting these together
yields the result.
\end{proof}

More generally, 
suppose that $\e\bigl(\tr(\Omega^\tran\wh\Theta)\bigr)$
contains a contribution of
$\mu^2\bsone^\tran\Omega\bsone$
which is nonzero if $\Omega$ is not a contrast.
Then a bias correction is available
for $\tr(\Omega^\tran\Theta)$.

\begin{proposition}\label{prop:unbiased}
Let $f\in L^2[0,1]^d$ and suppose that 
$\bsx_i,\bsz_i\simiid\dustd(0,1)^d$
for $i=1,\dots,n$ for $n\ge 2$.
For $u\subseteq\cd$ define 
\begin{align*}
\hat\mu_u & =\frac1n\sum_{i=1}^nf(\bsx_{i,u}\glu\bsz_{i,-u}),
\quad\text{and}\quad
{s^2_u}  = \frac1{n-1}\sum_{i=1}^n(f(\bsx_{i,u}\glu\bsz_{i,-u})-\hat\mu_u)^2.
\end{align*}
Then 
\begin{align}\label{eq:unbiased}
\frac{2n}{2n-1}
\sum_u\sum_v {\Omega_{uv}}
\Bigl(\wh\Theta_{uv}
- \Bigl(\frac{\hat\mu_u+\hat\mu_v}2\Bigr)^2
+\frac{s_u^2+{s_v}^2}{4n}
\Bigr)
\end{align}
is an unbiased estimate of
$\sum_u\sum_v\Omega_{uv}(\Theta_{uv}-\mu^2)$.
\end{proposition}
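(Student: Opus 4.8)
The plan is to reduce to a single $(u,v)$ pair by linearity of expectation and then mirror the calculation in Proposition~\ref{prop:janonunb}, which is the special case of this statement in which one of the two fixed sets equals $\cd$. Since the $\Omega_{uv}$ are constants and expectation is linear, it suffices to show that for every fixed pair $(u,v)$,
$$\frac{2n}{2n-1}\,\e\Bigl(\wh\Theta_{uv}-\bigl(\tfrac{\hat\mu_u+\hat\mu_v}2\bigr)^2+\tfrac{s_u^2+s_v^2}{4n}\Bigr)=\Theta_{uv}-\mu^2.$$
Writing $Y_i^u=f(\bsx_{i,u}\glu\bsz_{i,-u})$, each $Y_i^u$ is $f$ evaluated at a uniform point, so it has mean $\mu$ and variance $\sigma^2$, and the pairs $(Y_i^u,Y_i^v)$ are iid across $i$.

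I would compute the three expectations separately. First, $\e(\wh\Theta_{uv})=\Theta_{uv}=\mu^2+\ult^2_{\nxor(u,v)}$ by Theorem~\ref{thm:withnxor}. Next, $\e(s_u^2)=\e(s_v^2)=\sigma^2$ since each is an ordinary sample variance with $n-1$ divisor. The one computation that carries the structure is the cross term: expanding $\e\bigl((\hat\mu_u+\hat\mu_v)^2\bigr)$ into a variance plus a squared mean gives $\var(\hat\mu_u)+\var(\hat\mu_v)+2\cov(\hat\mu_u,\hat\mu_v)+4\mu^2$, where $\var(\hat\mu_u)=\sigma^2/n$ and, crucially, $\cov(\hat\mu_u,\hat\mu_v)=\tfrac1n\cov(Y_i^u,Y_i^v)=\tfrac1n(\Theta_{uv}-\mu^2)=\ult^2_{\nxor(u,v)}/n$. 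Thus $\e\bigl((\tfrac{\hat\mu_u+\hat\mu_v}2)^2\bigr)=\mu^2+\tfrac{\sigma^2}{2n}+\tfrac{\ult^2_{\nxor(u,v)}}{2n}$.

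Assembling the bracketed term, the $\mu^2$ cancels against the leading term of $\Theta_{uv}$, the two $\sigma^2/(2n)$ contributions cancel against each other, and what remains is $\ult^2_{\nxor(u,v)}-\tfrac{\ult^2_{\nxor(u,v)}}{2n}=\tfrac{2n-1}{2n}\,\ult^2_{\nxor(u,v)}$. Multiplying by $\tfrac{2n}{2n-1}$ recovers $\ult^2_{\nxor(u,v)}=\Theta_{uv}-\mu^2$, and summing over $(u,v)$ with weights $\Omega_{uv}$ gives the claim. The work is entirely routine; the only place where the GSI structure enters is the covariance step, where Theorem~\ref{thm:withnxor} converts the cross moment of two differently-fixed evaluations into $\ult^2_{\nxor(u,v)}$. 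I therefore expect no genuine obstacle, only the bookkeeping to confirm that the single factor $\tfrac{2n}{2n-1}$ debiases the whole weighted sum — which it does precisely because each bracketed expectation is the same multiple $\tfrac{2n-1}{2n}$ of its target $\Theta_{uv}-\mu^2$, independent of $(u,v)$, so the factor pulls out of the sum.
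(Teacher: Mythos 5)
Your proof is correct and follows essentially the same route as the paper: the paper simply invokes Proposition~\ref{prop:janonunb} term by term, and your computation is exactly that argument carried out explicitly for a general pair $(u,v)$, with $\cov(\hat\mu_u,\hat\mu_v)=\ult^2_{\nxor(u,v)}/n$ replacing the special case $\cov(\hat\mu,\hat\mu')=\ult^2_u/n$ via Theorem~\ref{thm:withnxor}. If anything, your write-up is slightly more complete than the paper's one-line proof, since it verifies that the factor $\tfrac{2n-1}{2n}$ is uniform over all $(u,v)$, including the diagonal terms.
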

\begin{proof}
This follows by applying Proposition~\ref{prop:janonunb}
term by term.
\end{proof}

The computational burden for the
unbiased estimator in 
Proposition~\ref{prop:unbiased} is not
much greater than that for
the possibly biased estimator $\tr(\Omega^\tran\wh\Theta)$.  
It requires no additional function evaluations.
The quantities
$\hat\mu_u$ and $s_u^2$ need only be computed
for sets $u\subseteq\cd$ for which
$\Omega_{uv}$ or $\Omega_{vu}$ is nonzero
for some $v$.
If $\Omega$ is a sum of bilinear estimators
then the $-\sum_u\sum_v\Omega_{uv}\hat\mu_u\hat\mu_v/2$
cross terms also have that property.

The bias correction in estimator~\eqref{eq:unbiased}
complicates calculation of confidence
intervals for $\tr(\Omega^\tran\Theta)$.
Jackknife or bootstrap methods will work but
confidence intervals for contrasts are much
simpler because the estimators are simple averages.

\section{Comparisons}\label{sec:compare}

There is a $2^{2d}$--dimensional space of GSIs but only
a $2^d-1$--dimensional space of linear combinations
of variance components to estimate. As a result there
is more than one way to estimate a desired
linear combination of variance components.

As a case in point the Sobol' index $\ult^2_u$
can be estimated by either the original
method or by the contrast~\eqref{eq:mauntz}.
\cite{jano:klei:lagn:node:prie:2012:tr}
prove that their estimate of $\hat\mu$
improves on the simpler one and establish
asymptotic efficiency for their estimator
within a class of methods based on exchangeability,
but that class does not include the contrast.
Similarly, inspecting the Sobol' matrix
yields at least four ways to estimate
the variance component $\sigma^2_{\{1,2,3\}}$,
and superset importance can be estimated
via a square or a bilinear term.

Here we consider some theoretical aspects
of the comparison, but they do not lead
to unambiguous choices. Next we consider a
small set of empirical investigations.

\subsection{Minimum variance estimation}

Ideally we would like to choose
$\Omega$ to minimize the variance
of the sample GSI.
But, the variance of a GSI depends on fourth
moments of ANOVA contributions which
are ordinarily unknown and harder to
estimate than the variance components
themselves.

The same issue comes up in the estimation
of variance components, where MINQE
(minimum norm quadratic estimation)
estimators  were proposed in 
a series of papers by C.\ R.\ Rao
in the 1970s.
For a comprehensive treatment see~\cite{rao:klef:1988}
who present MINQUE and MINQIE versions
using unbiasedness or invariance as constraints.
The idea in MINQUE estimation is to minimize
a convenient quadratic norm as a proxy for
the variance of the estimator. 

The GSI context involves variance components
for crossed random effects models with interactions
of all orders.  Even the two way crossed
random effects model with an interaction is complicated
enough that no closed form estimator appears
to be known for that case.  See~\cite{klef:1980}. 

We can however generalize the idea behind
MINQE estimators to the GSI setting.
Writing
\begin{align*}
\var( \tr(\Omega^\tran\wh\Theta) )
& = 
\sum_{u\subseteq\cd}\sum_{v\subseteq\cd}
\sum_{u'\subseteq\cd}\sum_{v'\subseteq\cd}
\Omega_{uv}\Omega_{u'v'}
\cov\bigl(\wh\Theta_{uv},\wh\Theta_{u'v'}\bigr)
\end{align*}
we can obtain the upper bound
$$
\var( \tr(\Omega^\tran\wh\Theta) )
\le 
\biggl(\sum_{u\subseteq\cd}\sum_{v\subseteq\cd}|\Omega_{uv}|^2\biggr)
\max_{u,v}
\var\bigl(\wh\Theta_{uv}\bigr),
$$
leading to a proxy measure
\begin{align}\label{eq:soboproxy}
V(\Omega)=\sum_{u\subseteq\cd}\sum_{v\subseteq\cd}|\Omega_{uv}|^2
=\tr(\Omega^\tran\Omega).
\end{align}

Using the proxy for variance suggests
choosing the estimator which minimizes $C(\Omega)\times V(\Omega)$.
The contrast estimator~\eqref{eq:mauntz}
of $\ult^2_u$ has
$C(\Omega)\times V(\Omega) = 3\times 2=6$
while the original Sobol' estimator has
$C(\Omega)\times V(\Omega) = 2\times 1=2$.
The estimators~\eqref{eq:simplevarcomp}
and~\eqref{eq:bilin1} for $\sigma^2_{\{1,2,3\}}$
both have $V(\Omega)=8$. The former has cost
$C(\Omega)=9$, while the latter costs $C(\Omega)=6$.
As a result, the proxy arguments support
the original Sobol' estimator and
the alternative estimator~\eqref{eq:bilin1} for $\sigma^2_{\{1,2,3\}}$.


\subsection{Test cases}

To compare some estimators we use test functions
of product form:
\begin{align}\label{eq:prodtest}
f(\bsx) = \prod_{j=1}^d(\mu_j + \tau_jg_j(x_j))
\end{align}
where each $g_j$ satisfies
$$\int_0^1g(x)\rd x = 0,\quad
\int_0^1g(x)^2\rd x = 1,\quad\text{and}\quad
\int_0^1g(x)^4\rd x < \infty.
$$
The third condition ensures that all GSIs have finite
variance, while the first two allow us to write the
variance components of $f$ as
$$
\sigma^2_u = 
\begin{cases}
\prod_{j\in u}\tau^2_j\times\prod_{j\not\in u}\mu_j^2, & |u|>0\\
0, &\text{else,}
\end{cases}
$$
along with $\mu=\prod_{j=1}^d\mu_j$.

We will compare Monte Carlo estimates and so smoothness
or otherwise of $g_j(\cdot)$ play no role.  Only $\mu_j$,
$\tau_j$ and the third and fourth moments of $g$
play a role.   
Monte Carlo estimation is suitable
when $f$ is inexpensive to evaluate, like surrogate
functions in computer experiments. 
For our examples we take $g_j(x) = \sqrt{12}(x-1/2)$
for all $j$.

For an example function of non-product form, 
we take the minimum,
$$ f(\bsx) = \min_{1\le j\le d}x_j.$$
\cite{meandim} show that
$$\ult_u^2 = \frac{|u|}{(d+1)^2(2d-|u|+2)},$$
for this function.
Taking $u =\cd$, gives $\sigma^2 = d(d+1)^{-2}(d+2)^{-1}$.

\subsection{Estimation of $\sigma^2_{\{1,2,3\}}$}

We considered both simple and bilinear estimators
of $\sigma^2_{\{1,2,3\}}$ in Section~\ref{sec:specific}.
The simple estimator requires $9$ function evaluations
per $(\bsx,\bsz)$ pair, while three different bilinear ones
each require only $6$.

For a function of product form, all four of these
estimators yield the same answer for any specific
set of $(\bsx_i,\bsz_i)$ pairs. As a result the
bilinear formulas dominate the simple one
for product functions.

For the minimum function, with $d=5$ we find
that by symmetry,
$$\sigma^2_u = \ult^2_{\{1,2,3\}}
-3\ult^2_{\{1,2\}}+3\ult^2_{\{1\}}=\frac1{5940}\doteq 1.68\times10^{-4}.$$

Because we are interested in comparing the variance of
estimators of a variance, a larger sample is warranted
than if we were simply estimating a variance component.
Based on $1{,}000{,}000$ function evaluations we find
the estimated means and standard errors
are given in Table~\ref{tab:mintable}. We see that
the bilinear estimators give about half the standard
error of the simple estimator, corresponding to
about $(1.05/.571)^2\times 9/6\doteq 5.1$ times the statistical efficiency.

\begin{table}
\centering
\begin{tabular}{lcccc}
\toprule
Estimator & Simple & Bilin.$\{1\}$  & Bilin.$\{2\}$  & Bilin.$\{3\}$ \\
\midrule
Mean &$1.74\times 10^{-4}$ &$1.72\times 10^{-4}$ &$1.68\times 10^{-4}$ &$1.70\times 10^{-4}$\\
Standard error  &
$1.05\times 10^{-5}$ & $5.69\times10^{-6}$ & $5.71\times10^{-6}$ &$5.67\times10^{-6}$\\
\bottomrule
\end{tabular}
\caption{\label{tab:mintable}
Estimated mean and corresponding standard error for
three estimators of $\sigma^2_{\{1,2,3\}}$ for $f(\bsx)=\min_{1\le j\le 5}x_j$
when $\bsx\sim\dustd(0,1)^5$.
The Bilin.$\{1\}$ estimator is from equation~\eqref{eq:bilin1},
and the other Bilinear estimators are defined analogously.
}
\end{table}


\subsection{Estimation of $\ult^2_{\{1,2\}}$}

We consider two estimators of $\ult^2_u$.
The estimator~\eqref{eq:mauntz} is a bilinear contrast, averaging
$f(\bsx)(f(\bsx_u\glu\bsz_{-u})-f(\bsz))$.
The estimator~\eqref{eq:janons} using
the estimator of $\hat\mu$ from 
\cite{jano:klei:lagn:node:prie:2012:tr}
is a modification of 
Sobol's original simple estimator
based on averaging $f(\bsx)f(\bsx_u\glu\bsz_{-u})$.
The bias correction of Section~\ref{sec:biasc} makes an
asymptotically negligible difference, so
we do not consider it here.

The contrast estimator requires $3$ function evaluations
per $(\bsx,\bsz)$ pair, while Sobol's only requires $2$.
Both estimators make an adjustment to
compensate for the bias $\mu^2$.
Estimator~\eqref{eq:janons} subtracts an estimate
$\hat\mu^2$ based on combining all $2n$ function
evaluations, the square of the most natural
way to estimate $\mu$ from the available data.
Estimator~\eqref{eq:mauntz}
subtracts $(1/n^2)\sum_i\sum_{i'}f(\bsx_i)f(\bsx_{i,u}\glu\bsz_{i,-u})$,
which may be advantageous when
the difference $f(\bsx_u\glu\bsz_{-u})-f(\bsz)$
involves considerable cancellation,
as it might if $\bsx_u$ is unimportant. Thus we might
expect~\eqref{eq:mauntz} to be better when
$\ult^2_u$ is small.
We compare the estimators on a product
function, looking at $\ult^2_{u}$ for three
subsets $u$ of size $2$ and varying importance.


For the product function with $d=6$,
$\tau = (1,1,1/2,1/2,1/4,1/4)$, 
all $\mu_j=1$ for $j=1,\dots,6$,
and $g_j(x_j)=\sqrt{12}(x_j-1/2)$,
we may compute
$\ult^2_{\{1,2\}} = 3\doteq 0.50\sigma^2$,
$\ult^2_{\{3,4\}} \doteq 1.56\doteq 0.093\sigma^2$, and
$\ult^2_{\{5,6\}} \doteq 0.13\doteq 0.021\sigma^2$.

Results from $R=10{,}000$
trials with $n=10{,}000$ 
$(\bsx_i,\bsz_i)$ pairs each, are shown in
Table~\ref{tab:sobomaunprod}. 
The efficiency of the contrast estimator
compared to the simple one ranges
from about $0.5$ to about $2.5$
in this example, depending on the size
of the effect being estimated, with
the contrast being better for the small
quantity $\ult^2_{\{5,6\}}$.
\cite{sobo:tara:gate:kuch:maun:2007}
also report superiority of the contrast
estimator on a small $\ult^2_u$.

Neither estimator is always more efficient than the other,
hence no proxy based solely on $\Omega$
can reliably predict which of these is
better for a specific problem.

\begin{table}\centering
\begin{tabular}{lrrrrrr}
\toprule
$n=10{,}000$&\multicolumn{2}{c}{$\{1,2\}$}
&\multicolumn{2}{c}{$\{3,4\}$}
&\multicolumn{2}{c}{$\{5,6\}$}\\
\midrule
& Cont.&Simp. & Cont.&Simp. & Cont.&Simp. \\
\midrule
True & 3.0000 & 3.0000 & 0.5625 & 0.5625 & 0.1289 & 0.1289\\
Avg. & 3.0002 & 3.0002 & 0.5624 & 0.5628 & 0.1291 & 0.1294\\
Bias & 0.0002 & 0.0002 & $-$0.0005 & 0.0003 & 0.0002 & 0.0005\\
\midrule
S.Dev& 0.1325 & 0.1186 & 0.0800 & 0.0998 & 0.0378 & 0.0737
\\
Neg & $-$\phz & $-$\phz & $-$\phz & $-$\phz & 0.0001 & 0.0336\\
\midrule
Eff. &\multicolumn{2}{c}{$0.53$}&\multicolumn{2}{c}{$1.04$}&\multicolumn{2}{c}{$2.54$}\\
\bottomrule
 \end{tabular}
 \caption{\label{tab:sobomaunprod}
This table compares a simple estimator versus
a contrast for  $\ult^2_u$ with $n=10{,}000$.
The sets compared are $u=\{1,2\}$, $\{3,4\}$, and $\{5,6\}$
and $f$ is the product function described in the text.
The rows give the true values of $\ult^2_u$,
and for $10{,}000$ replicates,
the (rounded) sample values of their
average, bias, standard deviation and
proportion negative.  The last line is
estimated efficiency of the contrast,
$(2/3)$ times the ratio of the standard deviations squared.
 }
\end{table}


The bias correction from Section~\ref{sec:biasc}
makes little difference here because for $n=10{,}000$
there is very little bias to correct. It does make
a difference when $n=100$ (data not shown) but
at such small sample sizes the standard deviation
of $\wh\ult^2_u$ can be comparable to or larger
than $\ult^2_u$ itself for this function.

\subsection{Estimation of $\Upsilon^2_{\{1,2,3,4\}}$}

Here we compare two estimates of
$\Upsilon^2_{\{1,2,3,4\}}$, the square~\eqref{eq:upassquare}
and the bilinear estimator~\eqref{eq:bilshortcut2}
from Theorem~\ref{thm:bilinsuper}.
For a product function,
$\Upsilon^2_w = \prod_{j\in w}\tau^2_j\prod_{j\not\in w}(\mu_j^2+\tau^2_j).$
Squares have an advantage estimating small GSIs
so we consider one small and one large (for a four
way interaction) $\Upsilon^2$.

For $d=8$, $\tau = c(4,4,3,3,2,2,1,1)/4$
and all $\mu_j=1$ we find that
$\Upsilon^2_{\{1,2,3,4\}} \doteq 0.558 \doteq 0.0334\sigma^2$
and
$\Upsilon^2_{\{5,6,7,8\}} \doteq 0.00238 \doteq 0.000147\sigma^2$.
The bilinear estimate~\eqref{eq:bilshortcut2}
based on $w_1=\{1,2\}$ and $w_2=\{3,4\}$
for $\Upsilon_{\{1,2,3,4\}}$
(respectively  $w_1=\{5,6\}$ and $w_2=\{7,8\}$
for $\Upsilon_{\{5,6,7,8\}}$)
requires $C=7$ function evaluations, while
the square~\eqref{eq:upassquare} requires $C=16$.
From Table~\ref{tab:bilinvssqr} we see that
the square has an advantage
that more than compensates for using a larger
number of function evaluations and the
advantage is overwhelming for the smaller effect.

\begin{table}\centering
\begin{tabular}{lcc}
\toprule
& $\{1,2,3,4\}$& $\{5,6,7,8\}$\\
\midrule
Bilinear & $35.07$ & 4.019 \\
Square   & $\phz6.04$ & 0.051  \\
\midrule
Efficiency & $14.7\phz$ & $2{,}710$ \\
\bottomrule
\end{tabular}
\caption{\label{tab:bilinvssqr}
Standard errors for estimation of $\Upsilon^2_w$
by the bilinear estimate and a square
as described in the text.
The estimated
standard errors based on $n=1{,}000{,}000$
replicates are $10^{-3}$ times the values
shown. The relative efficiency of the square
is $7/16$ times the squared ratio of standard deviations.
}
\end{table}

The outlook for the bilinear estimator of $\Upsilon^2_w$ is
pessimistic. Its cost advantage grows with $|w|$;
for $|w|=20$ it has cost $1023$
compared to $2^{20}$ for the square. But $\Upsilon^2_w$
for such a large $w$ will often be so small that
the variance advantage from using a square will
be extreme.

\section{Conclusions}\label{sec:conc}

We have generalized Sobol' indices
to estimators of arbitrary linear
combinations of variance components.
Sometimes there are multiple ways
to estimate a generalized Sobol' index
with important efficiency differences.
Square GSIs where available
are very effective. 
When no square or sum of squares is available
a bilinear or low rank GSI can at least save some
function evaluations. Contrasts are simpler
to work than other GSIs, because they avoid bias corrections.


\section*{Acknowledgments}

This work was supported by the U.S.\ National
Science Foundation under grant DMS-0906056.
I thank Alexandra Chouldechova for translating
Sobol's description of the analysis of variance.
Thanks to Sergei Kucherenko for discussions on
Sobol' indices.
I also thank the researchers of the GDR MASCOT NUM
for an invitation to their 2012 meeting which
lead to the research presented here.

\bibliographystyle{apalike}
\bibliography{sensitivity}

\begin{thebibliography}{}

\bibitem[Acworth et~al., 1997]{acbrogla97}
Acworth, P., Broadie, M., and Glasserman, P. (1997).
\newblock A comparison of some {Monte Carlo} techniques for option pricing.
\newblock In Niederreiter, H., Hellekalek, P., Larcher, G., and Zinterhof, P.,
  editors, {\em {Monte Carlo} and quasi-{Monte Carlo} methods '96}, pages
  1--18. Springer.

\bibitem[Box et~al., 1978]{bhh}
Box, G. E.~P., Hunter, W.~G., and Hunter, J.~S. (1978).
\newblock {\em Statistics for Experimenters: An Introduction to Design, Data
  Analysis and Model Building}.
\newblock New York.

\bibitem[Caflisch et~al., 1997]{cafmowen}
Caflisch, R.~E., Morokoff, W., and Owen, A.~B. (1997).
\newblock Valuation of mortgage backed securities using {Brownian} bridges to
  reduce effective dimension.
\newblock {\em Journal of Computational Finance}, 1:27--46.

\bibitem[Fisher and Mackenzie, 1923]{fish:mack:1923}
Fisher, R.~A. and Mackenzie, W.~A. (1923).
\newblock The manurial response of different potato varieties.
\newblock {\em Journal of Agricultural Science}, xiii:311--320.

\bibitem[Fruth et~al., 2012]{frut:rous:kuhn:2012:tr}
Fruth, J., Roustant, O., and Kuhnt, S. (2012).
\newblock Total interaction index: a variance-based sensitivity index for
  interaction screening.
\newblock Technical report, Ecole Nationale Superieure des Mines.

\bibitem[Hoeffding, 1948]{hoef:1948}
Hoeffding, W. (1948).
\newblock A class of statistics with asymptotically normal distribution.
\newblock {\em Annals of Mathematical Statistics}, 19:293--325.

\bibitem[Hooker, 2004]{hook:2004}
Hooker, G. (2004).
\newblock Discovering additive structure in black box functions.
\newblock In {\em Proceedings of the tenth ACM SIGKDD international conference
  on Knowledge discovery and data mining}, KDD '04, pages 575--580, New York,
  NY, USA. ACM.

\bibitem[Imai and Tan, 2002]{imai:tan:2002}
Imai, J. and Tan, K.~S. (2002).
\newblock Enhanced quasi-{Monte Carlo} methods with dimension reduction.
\newblock In Y\"ucesan, E., Chen, C.-H., Snowdon, J.~L., and Charnes, J.~M.,
  editors, {\em Proceedings of the 2002 Winter Simulation Conference}, pages
  1502--1510. IEEE Press.

\bibitem[Ishigami and Homma, 1990]{ishi:homm:1990}
Ishigami, T. and Homma, T. (1990).
\newblock An importance quantification technique in uncertainty analysis for
  computer models.
\newblock In {\em Uncertainty Modeling and Analysis, 1990. Proceedings., First
  International Symposium on}, pages 398 --403.

\bibitem[Janon et~al., 2012]{jano:klei:lagn:node:prie:2012:tr}
Janon, A., Klein, T., Lagnoux, A., Nodet, M., and Prieur, C. (2012).
\newblock {Asymptotic normality and efficiency of two Sobol' index estimators}.
\newblock Technical report, INRIA.

\bibitem[Kleffe, 1980]{klef:1980}
Kleffe, J. (1980).
\newblock {C. R. Rao's MINQUE} under four two-way {ANOVA} models.
\newblock {\em Biometrical Journal}, 22(2):93--104.

\bibitem[Kucherenko et~al., 2011]{kuch:feil:shah:maun:2011}
Kucherenko, S., Feil, B., Shah, N., and Mauntz, W. (2011).
\newblock The identification of model effective dimensions using global
  sensitivity analysis.
\newblock {\em Reliability Engineering \& System Safety}, 96(4):440--449.

\bibitem[Liu and Owen, 2006]{meandim}
Liu, R. and Owen, A.~B. (2006).
\newblock Estimating mean dimensionality of analysis of variance
  decompositions.
\newblock {\em Journal of the American Statistical Association},
  101(474):712--721.

\bibitem[Mauntz, 2002]{maun:2002}
Mauntz, W. (2002).
\newblock Global sensitivity analysis of general nonlinear systems.
\newblock Master's thesis, Imperial College.
\newblock Supervisors: C. Pantelides and S. Kucherenko.

\bibitem[Montgomery, 1998]{mont:1997}
Montgomery, D.~C. (1998).
\newblock {\em Design and analysis of experiments}.
\newblock John Wiley \& Sons Inc., New York.

\bibitem[Niederreiter, 1992]{nied92}
Niederreiter, H. (1992).
\newblock {\em Random Number Generation and Quasi-{Monte Carlo} Methods}.
\newblock S.I.A.M., Philadelphia, PA.

\bibitem[Oakley and O'Hagan, 2004]{oakl:ohag:2004}
Oakley, J.~E. and O'Hagan, A. (2004).
\newblock Probabilistic sensitivity analysis of complex models: {a Bayesian
  approach}.
\newblock {\em Journal of the Royal Statistical Society, Series B},
  66(3):751--769.

\bibitem[Owen, 1998]{lss}
Owen, A.~B. (1998).
\newblock Latin supercube sampling for very high dimensional simulations.
\newblock {\em ACM Transactions on Modeling and Computer Simulation},
  8(2):71--102.

\bibitem[Rao and Kleffe, 1988]{rao:klef:1988}
Rao, C.~R. and Kleffe, J. (1988).
\newblock {\em Estimation of variance components and applications}.
\newblock North-Holland, Amsterdam.

\bibitem[Saltelli, 2002]{salt:2002}
Saltelli, A. (2002).
\newblock Making best use of model evaluations to compute sensitivity indices.
\newblock {\em Computer Physics Communications}, 145:280--297.

\bibitem[Saltelli et~al., 2008]{salt:ratt:andr:camp:cari:gate:sais:tara:2008}
Saltelli, A., Ratto, M., Andres, T., Campolongo, F., Cariboni, J., Gatelli, D.,
  Saisana, M., and Tarantola, S. (2008).
\newblock {\em Global Sensitivity Analysis. The Primer}.
\newblock John Wiley \& Sons, Ltd, New York.

\bibitem[Sobol', 1969]{sobo:1969}
Sobol', I.~M. (1969).
\newblock {\em Multidimensional Quadrature Formulas and {H}aar Functions}.
\newblock Nauka, Moscow.
\newblock (In Russian).

\bibitem[Sobol', 1990]{sobo:1990}
Sobol', I.~M. (1990).
\newblock On sensitivity estimation for nonlinear mathematical models.
\newblock {\em Matematicheskoe Modelirovanie}, 2(1):112--118.
\newblock (In Russian).

\bibitem[Sobol', 1993]{sobo:1993}
Sobol', I.~M. (1993).
\newblock Sensitivity estimates for nonlinear mathematical models.
\newblock {\em Mathematical Modeling and Computational Experiment}, 1:407--414.

\bibitem[Sobol', 2001]{sobo:2001}
Sobol', I.~M. (2001).
\newblock Global sensitivity indices for nonlinear mathematical models and
  their {Monte Carlo} estimates.
\newblock {\em Mathematics and Computers in Simulation}, 55:271--280.

\bibitem[Sobol’ et~al., 2007]{sobo:tara:gate:kuch:maun:2007}
Sobol’, I.~M., Tarantola, S., Gatelli, D., Kucherenko, S.~S., and Mauntz, W.
  (2007).
\newblock Estimating the approximation error when fixing unessential factors in
  global sensitivity analysis.
\newblock {\em Reliability Engineering \& System Safety}, 92(7):957--960.

\end{thebibliography}

\end{document}